\newcommand{\Qq}{\enm{\cal{Q}}}

\newcommand{\Tt}{\enm{\cal{T}}}

\renewcommand{\phi}{\varphi}
\renewcommand{\theta}{\vartheta}
\documentclass[12pt,oneside,reqno]{amsart}
 
\usepackage[all]{xy}                        %
\usepackage[hyperindex=true, colorlinks=true, linkcolor=blue, filecolor=black,
citecolor=blue, urlcolor=blue, bookmarks=true, pagebackref=true,
pdftitle={Elementary transformations of toric vector bundles}]{hyperref}

\usepackage[margin=1in]{geometry}
\usepackage[table]{xcolor}
\usepackage{colortbl, array, booktabs}

\definecolor{lightred}{rgb}{1, 0.85, 0.85}
\definecolor{lightblue}{rgb}{0.85, 0.9, 1}
\definecolor{darkline}{rgb}{0.4, 0.4, 0.4} 
\usepackage{verbatim}
\usepackage[numbers]{natbib}
\CompileMatrices                            

\UseTips                                    

\usepackage[bookmarks=true]{hyperref}       
\hypersetup{
    colorlinks=true,    
    linkcolor=blue,     
    citecolor=blue,     
    filecolor=blue,     
    urlcolor=blue       
}
\usepackage{amssymb,latexsym,amsmath,amscd}
\numberwithin{equation}{section}
\usepackage{xspace}
\usepackage{color}
\usepackage{graphicx}
\usepackage{caption}
\usepackage{mathtools}
\usepackage{tikz}
\usetikzlibrary{cd}
\usepackage[utf8]{inputenc}
\usepackage{fourier}
\usepackage{array}
\usepackage{makecell}
\usepackage{colortbl}
\usepackage{bm}
\usepackage{array}
\usepackage{subcaption}
\usepackage[table]{xcolor}
\usepackage{multirow}

\definecolor{lightred}{RGB}{255,220,220}
\definecolor{lightblue}{RGB}{220,235,255}

\newcolumntype{L}[1]{>{\raggedright\let\newline\\\arraybackslash\hspace{0pt}}m{#1}}
\newcolumntype{C}[1]{>{\centering\let\newline\\\arraybackslash\hspace{0pt}}m{#1}}
\newcolumntype{R}[1]{>{\raggedleft\let\newline\\\arraybackslash\hspace{0pt}}m{#1}}
\definecolor{lightred}{rgb}{1.0,0.9,0.9}
\definecolor{lightblue}{rgb}{0.9,0.9,1.0}

\reversemarginpar

\theoremstyle{plain}
\newtheorem{theorem}{Theorem}[section]
\newtheorem*{theorem*}{Theorem}
\newtheorem{proposition}[theorem]{Proposition}
\newtheorem{corollary}[theorem]{Corollary}

\theoremstyle{definition}
\newtheorem{definition}[theorem]{Definition}

\newtheorem{remark}[theorem]{Remark}
\newtheorem{example}[theorem]{Example}

\newcommand{\enm}[1]{\ensuremath{#1}}          %
\newcommand{\op}[1]{\operatorname{#1}}
\newcommand{\cal}[1]{\mathcal{#1}}

\renewcommand{\bar}[1]{\overline{#1}}

\newcommand{\CC}{\enm{\mathbb{C}}}

\newcommand{\RR}{\enm{\mathbb{R}}}

\newcommand{\ZZ}{\enm{\mathbb{Z}}}
\newcommand{\FF}{\enm{\mathbb{F}}}

\newcommand{\PP}{\enm{\mathbb{P}}}

\newcommand{\Ee}{\enm{\cal{E}}}
\newcommand{\Ff}{\enm{\cal{F}}}
\newcommand{\Gg}{\enm{\cal{G}}}
\newcommand{\Hh}{\enm{\cal{H}}}
\newcommand{\Ii}{\enm{\cal{I}}}

\newcommand{\Kk}{\enm{\cal{K}}}
\newcommand{\Ll}{\enm{\cal{L}}}

\newcommand{\Nn}{\enm{\cal{N}}}
\newcommand{\Oo}{\enm{\cal{O}}}
\renewcommand{\epsilon}{\varepsilon}

\newcommand{\Hom}{\op{Hom}}

\newcommand{\gr}{\op{gr}}

\renewcommand{\to}[1][]{\xrightarrow{\ #1\ }}

\newcommand{\old}[1]{}

\makeatletter
\NewCommandCopy\@@pmod\pmod
\DeclareRobustCommand{\pmod}{\@ifstar\@pmods\@@pmod}
\def\@pmods#1{\mkern4mu({\operator@font mod}\mkern 6mu#1)}
\makeatother

\begin{document}
\renewcommand{\arraystretch}{1.4}
\arrayrulecolor{darkline}

\title[Elementary transformations of toric vector bundles]{Elementary transformations of toric vector bundles}

\author{Sukmoon Huh}


\address{Sungkyunkwan University, Suwon 440-746, Korea}
\email{sukmoonh@skku.edu}

\thanks{SH was supported by the National Research Foundation of Korea (NRF) grant funded by the Korean government (MSIT) (No.~RS-2023-00208874).}

\keywords{toric bundle, elementary transformation}

\subjclass[2020]{14M25, 14J60}

\begin{abstract}
We describe negative and positive elementary transformations of toric vector bundles in terms of Klyachko filtrations. We first determine the filtrations of the restriction of a toric vector bundle to an invariant prime divisor and then derive explicit formulas for the transformed bundle: only the filtration at the distinguished ray changes. As applications, we interpret adding and removing invariant boundary components of logarithmic tangent bundles as elementary transformations and recover the residue sequence for logarithmic cotangent bundles.
\end{abstract}

\maketitle

\section{Introduction}
Elementary transformations constitute one of the fundamental operations in the theory of vector bundles. They have long been used in the construction of moduli spaces, the study of stability, and various birational and logarithmic constructions; see \cite{HL, narashimhan-ramanan1}. Given a vector bundle $\Ee$ on a smooth variety $X$ together with a quotient bundle $\Qq$ supported on a divisor $D$, the elementary transformation modifies the bundle only along that divisor, while preserving its generic fiber. Although this construction is classical, it is useful to have a systematic description of it in terms of the combinatorial data of toric vector bundles. In this paper, we give such a description using Klyachko filtrations.

Toric vector bundles admit a remarkably simple combinatorial description through the Klyachko classification, which identifies equivariant vector bundles on a smooth toric variety with collections of compatible filtrations indexed by the rays of the defining fan. Since many geometric properties can be read directly from these filtrations, Klyachko's theory has become one of the principal tools in the study of toric vector bundles; \cite{Kly90, Kly91, Per03, KD19,DDK20, HH}. In this paper, we incorporate elementary transformations into this filtration-theoretic framework by describing their effect directly on Klyachko data. Although the relationship described here may be familiar to experts, we have not found a reference in which the effect of elementary transformations on Klyachko filtrations is formulated explicitly.

Our starting point is the restriction $\Ee_{|D}$ of a toric vector bundle to an invariant prime divisor $D$. Although the restriction is naturally equivariant, its generic fiber is no longer the original fiber but rather the associated graded space with respect to the distinguished filtration. We first describe this restriction completely by decomposing it into stabilizer-weight components and determining the induced Klyachko filtrations after passing to the effective quotient torus $\overline{T}$. The resulting formulas are intrinsic and depend only on intersections and quotient maps of the original filtrations. Building on this description, we obtain explicit formulas for elementary transformations. Suppose that
\[
q: \Ee_{|D}\twoheadrightarrow \Qq
\]
is an equivariant quotient bundle on an invariant divisor $D$. The quotient induces linear maps on the graded pieces of the distinguished filtration, and we prove that the Klyachko filtrations of the elementary transform are obtained by replacing the filtration corresponding to $D$ with the inverse images of the kernels of these graded maps, while all remaining filtrations are unchanged. 

We also obtain the analogous description for positive elementary transformations, together with the corresponding exact sequences and determinant formulas.
The filtration formulas provide an effective combinatorial realization of elementary transformations. Instead of reconstructing equivariant bundles geometrically, one may perform the transformation directly on finite-dimensional linear-algebraic data. This makes the construction particularly suitable for explicit calculations and applications.

As an application, we reinterpret logarithmic tangent bundles on smooth toric varieties in terms of elementary transformations. We prove that adding an invariant boundary component is precisely a negative elementary transformation, whereas removing a boundary component is realized by the corresponding positive transformation. Via duality, the same construction recovers the Poincar\'e residue sequence for logarithmic cotangent bundles. Thus elementary transformations provide a natural and uniform framework for understanding logarithmic bundles in the toric setting.

The final section presents several examples illustrating the general theory. Besides direct sums of equivariant line bundles, we study elementary transformations of tangent bundles and their exterior powers on projective spaces, showing that the filtration formulas immediately recover the transformed bundles. These examples demonstrate that the filtration-theoretic description is both conceptually natural and computationally effective.

We expect that the explicit formulas established here will be useful in further investigations of toric vector bundles. In particular, they provide a convenient tool for constructing new equivariant bundles, studying their stability, and analyzing moduli-theoretic questions through Klyachko's linear-algebraic framework.

\section{Preliminaries}
An $n$-dimensional  \textit{toric variety} is a normal irreducible variety $X$ containing a torus $T \simeq \left(\mathbb{C}^{\times}\right)^{n}$ as a Zariski open subset such that the action of $T$ on itself extends to an algebraic action of $T$ on $X$. Given $m = (a_{1}, \dots , a_{n}) \in \mathbb{Z}^n$, the character $\chi^{m}: (\mathbb{C}^{\times})^{n}\rightarrow \mathbb{C}^{\times}$ is defined by 
\[
\chi^{m}(t_{1},\dots,t_{n}) = {t_{1}}^{a_1} \dots {t_{n}}^{a_{n}}.
\]
The set of characters forms a free abelian group $M$ of rank $n$. A \textit{one-parameter subgroup} of $T$ is a morphism $\ u: \mathbb{C}^{\times} \rightarrow T$ that is a group homomorphism. The one-parameter subgroups form a free abelian group $N$ of rank $n$. We define $N_{\mathbb{R}} := N \otimes_{\mathbb{Z}} \mathbb{R}$ and $M_{\mathbb{R}} := M \otimes_{\mathbb{Z}} \mathbb{R}$. There is a natural pairing \( \langle \cdot, \cdot \rangle \colon M \times N \to \mathbb{Z} \), defined by the relation
\[
m \circ u(t) = t^{\langle m, u \rangle},
\]
for all \( t \in \mathbb{C}^\times \), where \( m \in M \) and \( u \in N \). Given a fan $\Sigma$ in $N_{\mathbb{R}}$, each cone $\sigma\in \Sigma$ produces an affine toric variety $U_{\sigma}= \operatorname{Spec}(\mathbb{C}[S_{\sigma}])$ for the semigroup $S_{\sigma}=\sigma^\vee \cap M$, gluing together to form a toric variety $X$ with $U_\sigma$ as a \( T \)-invariant affine chart of $X$; see \cite{Ful93} and \cite[Chapter 3]{CLS11}. There exists a one-to-one correspondence between the set of rays $\rho \in \Sigma(1)$ and the set of $T$-invariant Weil divisors $D_\rho$. Throughout the remainder of this paper, we always assume that $X=X_\Sigma$ is smooth and complete. For a fixed ray
\[
\rho_0=\RR_{\geq0}u_0\in\Sigma(1),
\]
with $u_0$ primitive, write $D_0=D_{\rho_0}$ the corresponding $T$-invariant prime divisor. The torus acting effectively on $D_0$ is the quotient $\overline T=T/T_0$, where $T_0\cong \mathbb{G}_m$ is the one-parameter subtorus determined by $u_0$. Its lattice of one-parameter subgroups is $\overline N=N/\ZZ u_0$ with the natural quotient map $p:N\rightarrow \overline N$. The fan of $D_0$ is
\begin{equation}
\Sigma_0=\mathrm{Star}_\Sigma(\rho_0)/\rho_0
=\left\{p(\sigma)\;\middle|\;\rho_0\preceq\sigma\in\Sigma\right\},
\end{equation}
and we have $D_0\cong  X_{\Sigma_0}$. Since $X$ is smooth, every ray $\bar\rho\in\Sigma_0(1)$ comes from a two-dimensional cone $\tau=\langle\rho_0,\rho\rangle\in\Sigma(2)$. Then the class $\bar u_\rho=p(u_\rho)$ is the primitive generator of $\bar\rho$. 

Denote the set of rays by $\Sigma(1)=\{\rho_0, \dots, \rho_q\}$, and let $u_i$ denote the primitive vector of $\rho_i$. For a fixed vector space $E$, we consider a collection of decreasing $\mathbb{Z}$-filtrations $\{E^{\rho_i}(\bullet) \}_{{\rho_i} \in \Sigma(1)}$ with 
\begin{equation}\label{Klyachko}
E^{\rho_i}(\bullet): \quad E\supseteq \dots \supseteq E^{\rho_i}(-1) \supseteq E^{\rho_i}(0) \supseteq E^{\rho_i} (1) \dots \supseteq 0
\end{equation}
for each ray $\rho_i\in \Sigma(1)$. The filtrations
$\{E^{\rho_i}(\bullet)\}_{\rho_i\in\Sigma(1)}$
are said to be {\it full} if, for each $\rho_i\in\Sigma(1)$,
$E^{\rho_i}(j)=E$ for $j\ll0$ and $E^{\rho_i}(j)=0$ for $j\gg0$.
Unless otherwise specified, all filtrations are assumed to be full. Then Klyachko presented a theorem to express $T$-equivariant reflexive sheaves as a set of decreasing $\ZZ$-filtrations.

\begin{theorem}\textnormal{\cite[Theorem 1.3.2]{Kly91}}\label{kly} The category of $T$-equivariant reflexive sheaves on $X$ is equivalent to the category of decreasing $\mathbb{Z}$-filtrations $\{E^{\rho_i} (\bullet)\}_{{\rho_i} \in \Sigma(1)}$. Such a reflexive sheaf is a toric bundle if and only if the corresponding filtrations satisfy the following compatibility condition:

\leftskip=2em 
\noindent $\mathrm{(C)}$ For any cone $\sigma \in \Sigma$, the filtrations $\{E^{\rho_i} (\bullet)\}_{{\rho_i} \in \sigma(1)}$, consist of coordinate subspaces of some basis of $E$. 
\rightskip=3em
\end{theorem}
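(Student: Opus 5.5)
We prove Klyachko's theorem by reducing to affine charts and using the weight decomposition of equivariant modules. The first step is to fix the generic fiber. Let $x_0\in T\subset X$ be the identity and, for a $T$-equivariant reflexive sheaf $\Ee$, set $E=\Ee_{x_0}$. Equivariance identifies $\Gamma(T,\Ee)\cong E\otimes\CC[M]$, with $T$ acting diagonally, so that each $\chi^m$-isotypic piece $\Gamma(T,\Ee)_m$ is canonically a copy of $E$.

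Next, for each ray $\rho_i$ with affine chart $U_{\rho_i}$, the module $\Gamma(U_{\rho_i},\Ee)$ is an $M$-graded torsion-free $\CC[S_{\rho_i}]$-submodule of $E\otimes \CC[M]$. Its weight-$m$ piece is a subspace of $E$, and we check that it depends only on $\langle m,u_i\rangle$. Multiplication by the invertible characters in $\rho_i^\perp\cap M$ shows it is constant along $\rho_i^\perp$. Multiplication by $\chi^{m'}$ with $\langle m',u_i\rangle=1$ gives monotonicity, which produces a decreasing filtration $E^{\rho_i}(j)$. Fullness follows from coherence (finite generation) together with $\Gamma(U_{\rho_i},\Ee)\otimes\CC[M]=E\otimes\CC[M]$.

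For a general cone $\sigma$ we then use reflexivity. Since $U_\sigma$ is normal and the complement of $\bigcup_{\rho\in\sigma(1)}U_\rho$ has codimension $\geq 2$, we get
\[
\Gamma(U_\sigma,\Ee)_m=\bigcap_{\rho_i\in\sigma(1)} E^{\rho_i}(\langle m,u_i\rangle).
\]
Hence $\Ee$ is determined by the filtrations. Conversely, given filtrations, this formula defines graded modules over each $U_\sigma$. These modules are reflexive, being intersections of the rank-one-type modules on the $U_\rho$, and they glue because the formula is compatible with restriction to faces. Morphisms correspond to linear maps $E\to F$ preserving the filtrations, since equivariant maps are determined on the generic fiber. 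This establishes the equivalence of categories.

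For the bundle criterion, recall that on an affine toric $U_\sigma$ (smooth here) an equivariant vector bundle splits as $\bigoplus_k \Oo\cdot\chi^{m_k}$. This is the step I expect to be the main obstacle. The plan is to reduce to the fixed point $x_\sigma$ and lift a $T_\sigma$-weight basis of the fiber at $x_\sigma$ by graded Nakayama, after first splitting off the free factor along $\sigma^\perp$. Granting the splitting, the associated filtrations are $E^{\rho_i}(j)=\mathrm{span}\{e_k : \langle m_k,u_i\rangle\geq j\}$, which are coordinate subspaces of the basis $\{e_k\}$, so condition (C) holds. Conversely, if (C) holds with basis $\{e_k\}$, choose $m_k\in M$ with $\langle m_k,u_i\rangle$ equal to the jump index of $e_k$ for each $\rho_i\in\sigma(1)$. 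This is possible because $\sigma(1)$ is part of a $\ZZ$-basis of $N$ by smoothness. The intersection formula then gives $\Gamma(U_\sigma,\Ee)=\bigoplus_k \CC[S_\sigma]\chi^{-m_k}e_k$ (with the appropriate sign convention), which is free.
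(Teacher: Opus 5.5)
The paper contains no proof of this statement. It quotes the theorem from Klyachko \cite[Theorem 1.3.2]{Kly91} and uses it as a black box, so there is no internal argument to compare yours with. Your outline is the standard proof of the cited theorem, and it is sound:
\begin{itemize}
\item fix $E$ as the fiber at $1\in T$;
\item read one filtration off the $M$-graded module $\Gamma(U_\rho,\Ee)$ for each ray;
\item recover $\Gamma(U_\sigma,\Ee)$ as an intersection by extending across codimension two;
\item characterize local freeness by an equivariant splitting on each $U_\sigma$.
\end{itemize}
Compared with the citation, your route gives a self-contained argument. It also shows exactly where smoothness enters: in realizing the prescribed jumps of the basis $\{e_k\}$ by characters $m_k$.

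There are three points to tidy.

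\textbf{Sign convention.} Since $\chi^{m'}\in\CC[S_{\rho_i}]$ when $\langle m',u_i\rangle=1$, the weight pieces of $\Gamma(U_{\rho_i},\Ee)$ increase with $\langle m,u_i\rangle$. A decreasing filtration therefore needs $E^{\rho_i}(j)=\Gamma(U_{\rho_i},\Ee)_m$ for $\langle m,u_i\rangle=-j$. The intersection formula must then read $\Gamma(U_\sigma,\Ee)_m=\bigcap_{\rho_i\in\sigma(1)}E^{\rho_i}(-\langle m,u_i\rangle)$. This is the convention under which your final module $\bigoplus_k\CC[S_\sigma]\chi^{-m_k}e_k$ is correct. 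The formula as you wrote it is stable under multiplication by $\chi^{-s}$, $s\in S_\sigma$, so it is not a $\CC[S_\sigma]$-module.

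\textbf{The deferred splitting lemma.} It closes exactly as you plan.
\begin{itemize}
\item Grade $\chi^m$ by $\langle m,u\rangle$ with $u=\sum_{\rho\in\sigma(1)}u_\rho$. This makes $A=\CC[S_\sigma]$ nonnegatively graded, with $A_0=\CC[\sigma^\perp\cap M]$ and $A_+$ the ideal of the closed orbit.
\item An $M$-graded $A_0$-module is free on homogeneous generators. The units $\chi^l$, $l\in\sigma^\perp\cap M$, identify the weight spaces within each coset of $\sigma^\perp\cap M$, so a basis of one weight space per coset is a homogeneous basis.
\item Lift a homogeneous basis of $P/A_+P$. By graded Nakayama the lifts generate $P=\Gamma(U_\sigma,\Ee)$.
\item A surjection $A^r\to P$ onto a projective module of the same rank is an isomorphism.
\end{itemize}

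\textbf{Converse half of the equivalence.} State explicitly that fullness is what makes the intersection module finitely generated: it lies in $E\otimes\Gamma(U_\sigma,\Oo_X(k\sum_\rho D_\rho))$ for $k\gg0$. You also get gluing for free by defining the sheaf as the push-forward of the bundle on $X_1=\bigcup_{\rho\in\Sigma(1)}U_\rho$, whose complement has codimension at least two. That push-forward is reflexive, and its sections over each $U_\sigma$ are exactly your intersection formula.
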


\begin{example} \label{Tangentsheaf} \cite[Example 2.1.3]{KD19} 
The filtrations associated to the tangent bundle \( \mathcal{T}_X \) are given by:
\[
E^{\mathbf{\rho}_{i}}(j)=
\begin{cases}
\hspace{.2cm} E & \text{for } \quad j \leq 0 ;\\
\hspace{.2cm} \CC u_i & \text{for } \quad j = 1; \\
\hspace{.2cm} 0 & \text{for } \quad j > 1.
\end{cases}
\]
\end{example}

\noindent Let $\Ee$ be a toric vector bundle on $X$ with generic fiber $E$, associated to the Klyachko filtration \eqref{Klyachko}. The generic fiber of $\Ee_{|D_0}$ at $\eta_0 \in D_0$ is not naturally the original vector space $E$. At the generic point of $D_0$, a local equation for $D_0$ defines a valuation, and reduction of the corresponding lattice gives
\begin{equation}
\bigl(\Ee_{|D_0}\bigr)_{\eta_0} \cong \gr^{\rho_0}(E)\otimes_{\CC} \CC(D_0)
:= \bigoplus_{a\in\ZZ}\gr_a^{\rho_0}(E) \qquad \text { with }\gr_a^{\rho_0}(E)=
\frac{E^{\rho_0}(a)}{E^{\rho_0}(a+1)}.
\end{equation}
We now describe the Klyachko data of the restriction $\Ee_{|D_0}$. Let $\bar\rho\in\Sigma_0(1)$, and let $\tau=\langle\rho_0,\rho\rangle\in\Sigma(2)$ be the corresponding two-dimensional cone. For every $a\in\ZZ$, denote the quotient map by
\[
\pi_a:E^{\rho_0}(a)\longrightarrow\gr_a^{\rho_0}(E)
\]
and choose $m_0\in M$ satisfying $\langle m_0,u_0\rangle=1$. The next proposition gives both the unnormalized $T$-equivariant description and the normalized Klyachko filtrations for the effective quotient torus $\overline T=T/T_0$.

\begin{proposition}\label{restriction}
The following statements hold.
\begin{enumerate}
\item [(i)] The $T$-equivariant bundle $\Ee_{|D_0}$ decomposes canonically according to the action of the stabilizer $T_0$:
\[
\Ee_{|D_0}=\bigoplus_{a\in\ZZ}\Ee_{0,a},
\]
and the generic fiber of $\Ee_{0,a}$ is $\gr_a^{\rho_0}(E)\otimes_\CC \CC(D_0)$. The subgroup $T_0$ acts on this summand
through its character of weight $a$.

\item [(ii)] Let $\bar\rho\in\Sigma_0(1)$ correspond to the two-dimensional cone $\langle\rho_0,\rho\rangle$. Before removing the stabilizer character, the filtration on $\gr_a^{\rho_0}(E)$ belonging to $\bar\rho$ is
\begin{equation}\label{filt-factor}
\widetilde F_a^{\bar\rho}(j) =\pi_a\bigl(E^{\rho_0}(a)\cap E^\rho(j)\bigr).
\end{equation}

\item [(iii)] Choose $m_0\in M$ with $\langle m_0,u_0\rangle=1$. After twisting $\Ee_{0,a}$ by the character $-am_0$, the action of $T_0$ becomes trivial and the bundle descends to the effective torus $\overline T$. Its ordinary Klyachko filtration for the fan $\Sigma_0$ is
\begin{equation}\label{Klyachko-filt}
F_a^{\bar\rho}(j)=\pi_a\left(E^{\rho_0}(a)\cap E^\rho\bigl(j+a\langle m_0,u_\rho\rangle\bigr) \right).
\end{equation}

\item [(iv)] These filtrations satisfy Klyachko's compatibility condition on every cone of $\Sigma_0$. Together with the stabilizer weights $a$, they determine $\Ee_{|D_0}$ uniquely up to $T$-equivariant isomorphism.
\end{enumerate}
\end{proposition}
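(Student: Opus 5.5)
The plan is to reduce everything to affine charts and work with Klyachko's description of sections. Recall that on $U_\sigma$ the module of sections is
\[
\Gamma(U_\sigma,\Ee)=\bigoplus_{m\in M} \Bigl(\bigcap_{\rho\in\sigma(1)} E^{\rho}(\langle m,u_\rho\rangle)\Bigr)\chi^m ,
\]
and by condition (C) of Theorem \ref{kly} this splits, for $\sigma$ smooth, as a direct sum of equivariant line bundles $\bigoplus_k \Oo(\text{div of weight } m_k)$ adapted to a basis $e_1,\dots,e_r$ of $E$. Restricting to $D_0\cap U_\sigma$ for $\rho_0\preceq\sigma$ amounts to tensoring with $\CC[S_\sigma]/(\text{ideal of }D_0)$. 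In degree $m$, that ideal is spanned by the $\chi^{m}$ with $\langle m,u_0\rangle>0$. So the restriction in degree $m$ is the quotient
\[
\frac{\bigcap_{\rho\in\sigma(1)}E^\rho(\langle m,u_\rho\rangle)}{\sum_{m'}\chi^{m-m'}(\cdots)} .
\]
Using the adapted basis, I will identify this quotient with $\pi_a\bigl(E^{\rho_0}(a)\cap\bigcap_{\rho\neq\rho_0}E^\rho(\langle m,u_\rho\rangle)\bigr)$, where $a=\langle m,u_0\rangle$. The key point is that an adapted basis element $e_k$ with $\rho_0$-weight exactly $a$ survives, while those with weight $>a$ lie in the image of multiplication by a local equation of $D_0$.

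For (i), note that $T_0=u_0(\CC^\times)$ fixes $D_0$ pointwise and acts on degree-$m$ sections by $t^{\langle m,u_0\rangle}$. Hence the restriction splits by the value $a=\langle m,u_0\rangle$ into summands $\Ee_{0,a}$. The computation above shows that the generic fiber of $\Ee_{0,a}$ (take $\sigma=\rho_0$) is $\gr^{\rho_0}_a(E)$, which is consistent with the displayed description of $(\Ee_{|D_0})_{\eta_0}$. Canonicity comes from the fact that the weight decomposition of a diagonalizable group action is canonical.

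For (ii), take $\sigma=\tau=\langle\rho_0,\rho\rangle$. The degree-$m$ piece of $\Ee_{0,a}$ over $D_0\cap U_\tau$ is $\pi_a(E^{\rho_0}(a)\cap E^\rho(\langle m,u_\rho\rangle))$. Reading off Klyachko's recipe along the ray $\bar\rho$, the filtration in index $j=\langle m,u_\rho\rangle$ is \eqref{filt-factor}.

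For (iii), twisting by $\chi^{-am_0}$ replaces $m$ by $m-am_0$, which satisfies $\langle m-am_0,u_0\rangle=0$. Such characters form exactly $\overline M=u_0^\perp$, the character lattice of $\overline T$. Writing $\bar m=m-am_0$ gives $\langle m,u_\rho\rangle=\langle\bar m,\bar u_\rho\rangle+a\langle m_0,u_\rho\rangle$, and this index shift yields \eqref{Klyachko-filt}. Here $\langle \bar m,\bar u_\rho\rangle$ is well defined because $\bar m\in u_0^\perp$.

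For (iv), compatibility on a cone $p(\sigma)$ follows from the adapted basis for $\sigma\ni\rho_0$. The images $\pi_a(e_k)$, over those $e_k$ of $\rho_0$-weight $a$, form a basis of $\gr_a^{\rho_0}(E)$. Each $E^{\rho_0}(a)\cap E^\rho(j)$ is spanned by a subset of the $e_k$, so its image under $\pi_a$ is a coordinate subspace. Uniqueness then follows from Theorem \ref{kly} applied on $D_0$ to each twisted summand, together with the weights $a$ recovering the untwisted $T$-action.

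The main obstacle is the chart computation identifying the degree-$m$ part of the restricted module with $\pi_a(\cdots)$. This needs care about which elements of the ideal land in a given degree, and it uses the adapted basis simultaneously for all rays of $\sigma$ so that intersections and quotients commute with passing to coordinates. Gluing across charts is then automatic, because the answers on different charts are visibly compatible on overlaps.
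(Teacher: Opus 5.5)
Your proposal is correct and is essentially the paper's argument: an adapted splitting on cones $\sigma\ni\rho_0$, survival of exactly the $\rho_0$-weight-$a$ directions in $\gr_a^{\rho_0}(E)$, the twist by $-am_0$, and gluing via the intrinsic intersection/quotient formulas, merely made explicit at the level of $\Gamma(U_\sigma,\Ee)/I_{D_0}\Gamma(U_\sigma,\Ee)$. The step you flag as the main obstacle is routine, since on a smooth chart $I_{D_0}$ is principal (generated by some $\chi^{m_1}$), so the degree-$m$ quotient is $(E^{\rho_0}(a)\cap C)/(E^{\rho_0}(a+1)\cap C)\cong\pi_a(E^{\rho_0}(a)\cap C)$ with $C=\bigcap_{\rho\in\sigma(1)\setminus\{\rho_0\}}E^\rho(\langle m,u_\rho\rangle)$, and the adapted basis is needed only to commute $\pi_a$ with intersections in (iv). One sign needs fixing: with the paper's decreasing convention the weight-$m$ summand of sections is $\bigl(\bigcap_\rho E^\rho(\langle m,u_\rho\rangle)\bigr)\chi^{-m}$, so your $\chi^m$ must be read as a weight label, and multiplication by $\chi^{m_1}$ then carries weight $m+m_1$ to weight $m$; this leaves your conclusions unchanged.
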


\begin{proof}
Let $\bar\sigma\in\Sigma_0$ and let $\sigma\in\Sigma$ be the corresponding
cone containing $\rho_0$, so that $\bar\sigma=\sigma/\rho_0$. Klyachko compatibility for $\Ee$ gives a finite
direct-sum decomposition
\begin{equation}\label{weight-decomp}
E=\bigoplus_{m\in M}E_m^\sigma
\end{equation}
such that, for every ray $\rho'\preceq\sigma$,
\begin{equation}\label{klyachko-coord-filt}
E^{\rho'}(j)=
\bigoplus_{\langle m,u_{\rho'}\rangle\geq j}E_m^\sigma.
\end{equation}
Only finitely many summands in \eqref{weight-decomp} are nonzero. The sign in \eqref{klyachko-coord-filt} is the sign corresponding to our decreasing-filtration convention. Taking $\rho'=\rho_0$ in \eqref{klyachko-coord-filt}, we obtain
\[
E^{\rho_0}(a)=\bigoplus_{\langle m,u_0\rangle\geq a}E_m^\sigma.
\]
Therefore, we have
\begin{equation}\label{graded-weight-decomp}
\gr_a^{\rho_0}(E) \cong \bigoplus_{\langle m,u_0\rangle=a}E_m^\sigma,
\end{equation}
discarding all weights except those lying on the affine hyperplane $\langle m,u_0\rangle=a$. The one-parameter subgroup $T_0$ acts on every summand in \eqref{graded-weight-decomp} with the same weight $a$. This proves the asserted description of the generic fiber in part~\textup{(i)}. Since a torus is diagonalizable and $T_0$ acts trivially on the base $D_0$, its eigenspaces glue to the subbundles $\Ee_{0,a}$.

For part~\textup{(ii)} observe first that Intersecting the two coordinate
filtrations gives
\[
E^{\rho_0}(a)\cap E^\rho(j)= \bigoplus_{\substack{\langle m,u_0\rangle\geq a\\\langle m,u_\rho\rangle\geq j}}E_m^\sigma.
\]
After quotienting by $E^{\rho_0}(a+1)$, only the terms with
$\langle m,u_0\rangle=a$ survive. Hence we have
\begin{equation}\label{induced-filt}
\pi_a\bigl(E^{\rho_0}(a)\cap E^\rho(j)\bigr)=\bigoplus_{\substack{\langle m,u_0\rangle=a\\\langle m,u_\rho\rangle\geq j}}E_m^\sigma.
\end{equation}
This is exactly the filtration of the component of weight $a$ by the remaining $v_\rho$-weights, proving part~\textup{(ii)}. Although we choose a compatible weight decomposition, the resulting filtration is intrinsic: it is the image of the canonical intersection $E^{\rho_0}(a)\cap E^\rho(j)$ under the canonical quotient map $E^{\rho_0}(a)\to\operatorname{gr}^{\rho_0}_a(E)$. Hence it is independent of the chosen weight decomposition.

The characters $m$ occurring in \eqref{graded-weight-decomp} do not themselves belong to the character lattice of $\overline T$, because they have $\langle m,u_0\rangle=a$ rather than zero. Choosing $m_0\in M$ with
$\langle m_0,u_0\rangle=1$, 
\[
\bar m:=m-am_0
\]
annihilates $u_0$ and therefore defines a character of $\overline T$. Its
pairing with the ray $\bar u_\rho$ is
\[ 
\langle\bar m,\bar u_\rho\rangle=\langle m,u_\rho\rangle-a\langle m_0,u_\rho\rangle.
\]
The inequality $\langle\bar m,\bar u_\rho\rangle\geq j$ is therefore equivalent to
\[
\langle m,u_\rho\rangle \geq j+a\langle m_0,u_\rho\rangle.
\]
Substitution into \eqref{induced-filt} gives precisely
the formula in part~\textup{(iii)}.

For the cone $\bar\sigma$, the spaces $E_m^\sigma$ with
$\langle m,u_0\rangle=a$, relabeled by the quotient characters
$\bar m=m-am_0$, provide a simultaneous weight decomposition of
$\gr_a^{\rho_0}(E)$ for all rays of $\bar\sigma$. Hence the filtrations in
part~\textup{(iii)} satisfy Klyachko compatibility on $\bar\sigma$. Repeating
the construction for every cone containing $\rho_0$ gives compatible local
descriptions. On overlaps, they yield the same intrinsic filtrations because
those filtrations are defined by intersections and quotient maps, not by a
chosen splitting. Klyachko's classification on $X_{\Sigma_0}=D_0$ now
reconstructs each normalized bundle $\Ee_{0,a}$ uniquely. Reattaching the
$T_0$-character of weight $a$ and taking the direct sum reconstructs
$\Ee_{|D_0}$, proving part~\textup{(iv)}.
\end{proof}


\section{Elementary transformation}

Suppose that there is a surjective equivariant morphism on $D_0$:
\begin{equation}\label{quotient}
q:~~\Ee_{|D_0}\twoheadrightarrow\Qq,
\end{equation}
where $\Ee$ is locally free as an $\Oo_X$-module of rank $s$ and $\Qq$ is locally free as an $\Oo_{D_0}$-module of rank $r$. Because $T_0$ acts trivially on $D_0$ and is diagonalizable, both vector bundles decompose canonically into stabilizer-weight subbundles:
\[
\Ee_{|D_0}=\bigoplus_{a\in\ZZ}\Ee_{0,a}, \qquad \Qq=\bigoplus_{a\in\ZZ}\Qq_a
\]
as in Proposition \ref{restriction}. Define $W_a$ by
\[
(\Qq_a)_{\eta_0}=W_a\otimes_\CC \CC(D_0).
\]
Thus $\Ee_{0,a}$ and $\Qq_a$ are vector bundles on $D_0$, whereas $\gr_a^{\rho_0}(E)$ and $W_a$ denote their generic vector spaces after suppressing extension of scalars to $\CC(D_0)$. Since $q$ is $T_0$-equivariant, it preserves stabilizer weights and decomposes as a direct sum of surjective bundle maps
\begin{equation}\label{stab-weight-map}
q=\bigoplus_a q_a, \qquad q_a:\Ee_{0,a}\twoheadrightarrow\Qq_a.
\end{equation}
On generic fibers, using the preceding identifications and suppressing the
factor $k(D_0)$, these maps are the linear surjections
\begin{equation}
q_a:\gr_a^{\rho_0}(E)\twoheadrightarrow W_a.
\end{equation}
With the same character normalization as in \eqref{Klyachko-filt}, let
\[
Q_a^{\bar\rho}(j)\subseteq W_a
\]
be its Klyachko filtration at $\bar\rho\in\Sigma_0(1)$. Since an equivariant bundle map is a morphism of Klyachko-filtered vector spaces, we have
\[
q_a\bigl(F_a^{\bar\rho}(j)\bigr)\subseteq Q_a^{\bar\rho}(j)
\]
for all $a,\bar\rho,j$. In fact, $q_a$ is strict, i.e., this inclusion is equality. Fix $\bar\rho\in\Sigma_0(1)$, let $R_{\bar\rho}:=\Oo_{D_0,\eta_{\bar\rho}}$ be the discrete valuation ring at the generic point $\eta_{\bar \rho}$, and choose a uniformizer $t$. Let $L_{E,a}$ and $L_{Q,a}$ be the equivariant $R_{\bar\rho}$-lattices determined by $\Ee_{0,a}$ and $\Qq_a$. Since $q_a:\Ee_{0,a}\rightarrow \Qq_a$ is a surjective morphism of
vector bundles, localization gives a surjection $L_{E,a}\twoheadrightarrow L_{Q,a}$. Under the lattice--filtration correspondence, we have
\[
F_a^{\bar\rho}(j) =\left\{e\mid t^{-j}e\in L_{E,a}\right\}, \qquad
Q_a^{\bar\rho}(j)=\left\{y\mid t^{-j}y\in L_{Q,a}\right\}.
\]
For $y\in Q_a^{\bar\rho}(j)$, we have $t^{-j}y\in L_{Q,a}$ so that lattice
surjectivity provides $\ell\in L_{E,a}$ with $q_a(\ell)=t^{-j}y$. Setting $e=t^j\ell$, we then have $t^{-j}e=\ell\in L_{E,a}$ and
$q_a(e)=y$. Thus we have
\[
Q_a^{\bar\rho}(j)\subseteq q_a(F_a^{\bar\rho}(j)).
\]
Together with the filtration-preserving inclusion above, this proves that
$q_a$ is \emph{strict}, namely
\begin{equation}
\label{eq:strict-quotient-filtration}
Q_a^{\bar\rho}(j)
=q_a\bigl(F_a^{\bar\rho}(j)\bigr).
\end{equation}
Combining this with \eqref{Klyachko-filt} gives
\[
Q_a^{\bar\rho}(j)=q_a\!\left(\pi_a\!\left(E^{\rho_0}(a)\cap E^\rho\bigl(j+a\langle m_0,u_\rho\rangle\bigr)\right)\right).
\]
If the stabilizer character is left unnormalized, the simpler form would be
\[
\widetilde Q_a^{\bar\rho}(j)=q_a\!\left(\frac{E^{\rho_0}(a)\cap E^\rho(j)+E^{\rho_0}(a+1)}{E^{\rho_0}(a+1)}\right).
\]
Consequently, the maps $q_a$ used in an elementary transform cannot be chosen independently of the other filtrations of $\Ee$. They must be compatible with every ray of $\Sigma_0$, equivalently with every two-dimensional cone of $\Sigma$ containing $\rho_0$.

\begin{remark}
A collection of surjective linear maps $q_a:\gr_a^{\rho_0}(E)\to W_a$ determines the codimension-one change in the elementary transform, but it does not automatically determine a toric vector bundle quotient on all of $D_0$. The image filtrations in (3.6) must satisfy Klyachko compatibility on every cone of $\Sigma_0$.
\end{remark}

Let $i:D_0\hookrightarrow X$, and let $q:\Ee_{|D_0}\twoheadrightarrow\Qq$ be an equivariant quotient in which $\Qq$ is locally free of rank $r$ on $D_0$. Composing restriction with $q$ gives
\begin{equation}\label{elem-transSeq}
\Ee \longrightarrow i_*\Ee_{|D_0}\longrightarrow i_*\Qq.
\end{equation}

\begin{proposition}\label{elem-transf}
The (negative) elementary transformation of $\Ee$ with respect to \eqref{quotient}
\begin{equation}
\Ee^-:=\mathrm{Elm}_{D_0,\Qq}(\Ee) =\ker\bigl(\Ee\rightarrow i_*\Qq\bigr)
\end{equation}
is a $T$-equivariant vector bundle of $\mathrm{rank}(\Ee^-)=\mathrm{rank}(\Ee)$. Its generic fiber on the dense torus is still $E$, and its Klyachko filtrations are given as follows:
\begin{equation}\label{elem-trans-ray}
\begin{cases}
\bigl(E^-\bigr)^\rho(j)=E^\rho(j), \qquad \rho\neq\rho_0;\\[2pt]
\bigl(E^-\bigr)^{\rho_0}(a)=\pi_a^{-1}(\ker q_a)=
\left\{e\in E^{\rho_0}(a)\;\middle|\;q_a\bigl(\overline{e}\bigr)=0\right\},
\end{cases}
\end{equation}
where $\overline{e}\in \gr_a^{\rho_0}(E)$ is the corresponding class to $e$. In particular, we have $\det (\Ee^-)\cong \det (\Ee)\otimes \Oo_X(-rD_0)$. 
\end{proposition}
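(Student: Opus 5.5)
The plan is to work locally on the invariant affine charts $U_\sigma$, where everything reduces to $M$-graded modules, and then read off the Klyachko filtrations from the graded pieces.

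\textbf{Local freeness and equivariance.} First, $\Ee^-$ is a $T$-equivariant subsheaf of $\Ee$, because restriction and $q$ are equivariant. Next, $i_*\Qq$ is locally free of rank $r$ on the Cartier divisor $D_0$, hence has projective dimension one over $\Oo_X$. Locally, choose a frame of $\Ee$ in which $q$ becomes the projection onto the last $r$ coordinates; this is possible because $q$ is a surjection of vector bundles on $D_0$ and lifts over a neighborhood. With $f$ a local equation of $D_0$, the kernel is then $\Oo^{s-r}\oplus f\Oo^r$. So $\Ee^-$ is locally free of rank $s$ and agrees with $\Ee$ off $D_0$, and in particular its generic fiber is $E$.

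\textbf{Filtrations at rays $\rho\neq\rho_0$.} Away from $D_0$ we have $\Ee^-=\Ee$. Since $U_\rho\cap D_0=\emptyset$ for $\rho\neq\rho_0$, we get $\Ee^-_{|U_\rho}=\Ee_{|U_\rho}$, and Klyachko's recipe (the filtration at $\rho$ is determined by the module of sections on $U_\rho$) gives $(E^-)^\rho=E^\rho$.

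\textbf{Filtration at $\rho_0$.} On $U_{\rho_0}\cong \CC\times(\CC^\times)^{n-1}$, recall that
\[
E^{\rho_0}(a)=\{e\in E : \chi^{-m}e\in\Gamma(U_{\rho_0},\Ee)\text{ for }\langle m,u_0\rangle=a\}.
\]
Equivalently, in terms of the DVR $R=\Oo_{X,\eta_0}$ with uniformizer $t=\chi^{m_0}$, the lattice is $L=\sum_a t^{-a}E^{\rho_0}(a)\otimes R$. Then $L/tL\cong\bigoplus_a \gr^{\rho_0}_a(E)$, and this is exactly the identification of Proposition~\ref{restriction}(i), with $t^{-a}e\mapsto \pi_a(e)$ landing in weight $a$. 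The lattice of $\Ee^-$ is $L^-=\ker(L\to L/tL\to \bigoplus_a W_a)$. An element $e\in E$ satisfies $t^{-a}e\in L^-$ iff $e\in E^{\rho_0}(a)$ and $q_a(\pi_a(e))=0$; here one uses that the reduction of $t^{-a}e$ lies in the weight-$a$ summand and that $q$ preserves weights. This gives $(E^-)^{\rho_0}(a)=\pi_a^{-1}(\ker q_a)$. It is also worth checking that the formula defines a decreasing filtration. Indeed, $E^{\rho_0}(a+1)\subseteq\pi_a^{-1}(\ker q_a)$, and $\pi_a^{-1}(\ker q_a)\subseteq E^{\rho_0}(a)$, which is contained in $\pi_{a-1}^{-1}(0)$ and hence in $(E^-)^{\rho_0}(a-1)$. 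The main obstacle here is getting the identification of the weight-$a$ summand with $\gr_a$ correct, including the sign conventions. I would handle it by writing $L$ explicitly in a compatible basis from \eqref{klyachko-coord-filt} for a cone $\sigma\ni\rho_0$.

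\textbf{Determinant.} Compute $c_1$ via Klyachko. For a toric bundle, $\det$ has filtration at $\rho$ jumping at $\sum_j j\dim\gr^\rho_j$, so $c_1(\Ee)=-\sum_\rho\bigl(\sum_j j\dim\gr^\rho_j\bigr)D_\rho$ up to the paper's sign convention. Only $\rho_0$ changes. The new graded pieces satisfy $\dim\gr^-_a=\dim\ker q_a+\dim(\gr_{a+1}/\ker q_{a+1})=\dim\ker q_a+\dim W_{a+1}$. The weighted sum therefore decreases by $\sum_a\dim W_a=r$, giving $\det\Ee^-\cong\det\Ee\otimes\Oo_X(-rD_0)$. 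Alternatively, and more simply, the exact sequence $0\to\Ee^-\to\Ee\to i_*\Qq\to0$ together with $\det(i_*\Qq)=\Oo_X(rD_0)$ (multiplicativity of $\det$ for the length-one resolution) gives the same result directly, and is equivariant.
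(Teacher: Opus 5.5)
Your proposal is correct and follows essentially the same route as the paper: the filtrations at $\rho\neq\rho_0$ are unchanged because $\Ee^-$ and $\Ee$ agree away from $D_0$. At $\rho_0$, the lattice of $\Ee^-$ over $\Oo_{X,\eta_0}$ is the inverse image of $\ker q$ under reduction, read off weight by weight as $\pi_a^{-1}(\ker q_a)$, and the determinant comes from the local form $\mathrm{diag}(1,\dots,1,t,\dots,t)$ or, equivalently, from your exact-sequence argument. Your local-frame proof of local freeness, the check that $\pi_a^{-1}(\ker q_a)$ is decreasing, and the alternative Klyachko computation of $c_1$ are useful additions. In the paper's convention, however, $\det\Ee$ corresponds to $+\sum_\rho\bigl(\sum_j j\dim\gr^\rho_j\bigr)D_\rho$, and that is the sign your conclusion $\det\Ee^-\cong\det\Ee\otimes\Oo_X(-rD_0)$ actually uses.
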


\begin{proof}
Since the map $\Ee \rightarrow i_*\Qq$ is the composite of two $T$-equivariant maps, $\Ee^-$ is $T$-equivariant vector bundle. The dense torus $T\subset X$ is disjoint from every invariant prime divisor,
in particular from $D_0$. Restricting \eqref{elem-transSeq} to $T$ therefore gives
\[
\bigl(\Ee^-\bigr)_{|T}\cong \Ee_{|T},
\]
and so two bundles consequently have the same generic fiber $E$. Now for $\rho\neq\rho_0$ the generic point $\eta_\rho$ of $D_\rho$ does not lie on $D_0$, because $D_\rho$ and $D_0$ are distinct prime divisors. Thus we have $(i_*\Qq)_{\eta_\rho}\cong 0$ and $\bigl(\Ee^-\bigr)_{\eta_\rho}\cong\Ee_{\eta_\rho}$. The Klyachko filtration at $\rho$ is determined by this equivariant lattice over the discrete valuation ring $\Oo_{X,\eta_\rho}$. The lattices are equal, so the filtrations are equal. 

Let $R_0=\Oo_{X,\eta_0}$ be the discrete valuation ring with $t_0$ a uniformizer, and $L_0$ be the $R_0$-lattice determined by $\Ee_{\eta_0}$. Reduction modulo $t_0$ identifies the fiber of $\Ee_{|D_0}$ with
\[
L_0/t_0L_0 \cong \gr^{\rho_0}(E)\otimes_\CC \CC(D_0)=\bigoplus_a\gr_a^{\rho_0}(E)\otimes_\CC \CC(D_0).
\]
The stabilizer $T_0$ is diagonalizable, so the quotient map decomposes by
weights as $q=\bigoplus_a q_a$. Then the lattice of the elementary transform is the inverse image of the kernel of
the reduction map:
\begin{equation}\label{elem-trans-lattice}
L_0' =\left\{\ell\in L_0\;\middle|\;q(\overline{\ell})=0\right\}=\mathrm{red}^{-1}(\ker q),
\end{equation}
where $\overline{\ell}\in L_0/t_0L_0$ is the class corresponding to $\ell$. Under the lattice--filtration correspondence, the reduction of the
component of weght $a$ is the canonical quotient map
\[
\pi_a:E^{\rho_0}(a)\longrightarrow E^{\rho_0}(a)/E^{\rho_0}(a+1)=\gr_a^{\rho_0}(E).
\]
Therefore an element $e\in E^{\rho_0}(a)$ lies in the component of weight $a$ of $L_0'$ exactly when $q_a\bigl(\pi_a(e)\bigr)=0$. Equivalently,
\[
e\in\pi_a^{-1}(\ker q_a).
\]
Notice that the formula uses only the generic maps $q_a$. Their compatibility with the filtrations along the rays of $\Sigma_0$ is what ensures that they arise from the global equivariant quotient map $q$. Once that quotient exists, Klyachko compatibility for the transformed filtrations also follows from the fact that $\Ee'$ is an equivariant vector bundle.

Finally, the assertion on $\det (\Ee^-)$ follows from the standard computation. Indeed, the inclusion $\Ee^-\hookrightarrow\Ee$ is locally represented by
\[
\mathrm{diag}(1,\ldots,1, \underbrace{t,\ldots,t}_{r\text{ times}}).
\]
Its determinant vanishes to order $r$ along $D_0$ and nowhere else. 
\end{proof}

\begin{example}
Let $X=\mathbb P^2$ with rays generated by $u_0=(1,0), u_1=(0,1),u_2=(-1,-1)$. Put $D=D_{\rho_0}$ and take $\Ee=\Oo_{\PP^2}^{\oplus 2}$ to consider $q: \Ee_{|D}\twoheadrightarrow\Oo_D$ defined by $ae_1+be_2\mapsto b$. At each ray, we have
\[
E^\rho(i)=\begin{cases}E=\CC e_1 \oplus \CC e_2 ,&i\leq0;\\ 0,&i\geq1.
\end{cases}
\]
At $\rho_0$, only $\gr_0^{\rho_0}(E)=E$ is nonzero and
$q_0(ae_1+be_2)=b$. Thus we have
\begin{equation}
\bigl(E^-\bigr)^{\rho_0}(i)=\begin{cases}E,&i\leq-1;\\ \CC e_1,&i=0;\\0,&i\geq1.
\end{cases}
\end{equation}
The other two ray filtrations remain unchanged, and so we obtain
\[
\Ee^- \cong \Oo_{\PP^2}\oplus \Oo_{\PP^2}(-D_0)\cong \Oo_{\PP^2}\oplus \Oo_{\PP^2}(-1). 
\]
\end{example}

\begin{example}
Let $X=\mathbb F_a$ be the Hirzebruch surface with ray generators $u_0=(0,1), u_1=(1,0), u_2=(0,-1), u_3=(-1,a)$. The corresponding $T$-invariant divisors are $D_0,\ldots,D_3$ and consider the projection $\Ee_{|D}\twoheadrightarrow\Oo_{\FF_a}(D_1+D_2)_{|D}$ for $D=D_2$ and $\Ee=\Oo_{\FF_a}(D_1+D_2)\oplus\Oo_{\FF_a}(D_3)$. Fix a basis $\{ e_1, e_2\}$ for $E$ from the each factor of $\Ee$. Then in the filtration of $\rho_2$, the $e_1$-line jumps at $1$, while the $e_2$-line jumps at $0$ so that we have
\[
E^{\rho_2}(i)=\begin{cases}
\CC e_1\oplus \CC e_2,&i\leq0;\\
\CC e_1,&i=1;\\
0,&i\geq2.
\end{cases}
\]
The only nonzero component of the map $q$ in \eqref{quotient} is the isomorphism from the $e_1$-part of $\gr_1^{\rho_2}(E)$. Consequently, we have 
\begin{equation}
\bigl(E^-\bigr)^{\rho_2}(i)=
\begin{cases}
\CC e_1\oplus \CC e_2,&i\leq0;\\
0,&i\geq1.
\end{cases}
\end{equation}
All other ray filtrations are unchanged, so we have
\[
\Ee^-\cong \Oo_{\FF_a}(D_1)\oplus\Oo_{\FF_a}(D_3).
\]
Thus the transform lowers the coefficient of $D_2$ in the first line-bundle summand $\Oo_{\FF_a}(D_1+D_2)$ by one.
\end{example}

\begin{example}
For any smooth toric variety $X$, fix $D=D_{\rho_0}$ and take the identity quotient $q:~\Ee_{|D}\xrightarrow{\cong}\Ee_{|D}$. Since each $q_a$ is the identity on $\gr_a^{\rho_0}(E)$, so we have
\begin{equation}
\bigl(E^-\bigr)^{\rho_0}(i)=E^{\rho_0}(i+1)  \qquad\text{and} \qquad 
\bigl(E^-\bigr)^\rho(i)=E^\rho(i)~~\text{ for }\rho\neq\rho_0.
\end{equation}
Therefore we have $\Ee^-\cong \Ee(-D)$.
\end{example}

\noindent Retain the equivariant quotient map \eqref{quotient} and set $\Kk:=\ker(q)$ a vector bundle of rank $s-r$ on $D_0$. 
\begin{definition}
The \emph{positive elementary transformation} associated with $q$ is defined by
\begin{equation}\label{positive-transform}
\mathrm{Elm}^+_{D_0,q}(\Ee):=\mathrm{Elm}_{D_0,\Qq}(\Ee)\otimes\Oo_X(D_0).
\end{equation}
Thus a negative transformation first forces the restriction to lie in $\Kk$,
whereas the subsequent twist by $D_0$ allows a simple pole in precisely the
$\Kk$-directions.
\end{definition}

\begin{proposition}\label{positive-transform-filt}
The positive elementary transformation $\Ee^+:=\mathrm{Elm}^+_{D_0,q}(\Ee)$ is an $T$-equivariant vector bundle of rank $s$ , fitting into an exact sequence
\begin{equation}\label{positive-transform-exact}
0\to \Ee \to \Ee^+ \to  i_*\bigl(\Kk \otimes\Oo_{D_0}(D_0)\bigr) \to 0.
\end{equation}
Its generic fiber on the dense torus is still $E$, and its Klyachko filtrations are given as follows:
\begin{equation}\label{elem-trans-ray-pos}
\begin{cases}
\bigl(E^+\bigr)^\rho(j)=E^\rho(j), \qquad \rho\neq\rho_0;\\[2pt]
\bigl(E^+\bigr)^{\rho_0}(a)=\pi_{a-1}^{-1}(\ker q_{a-1}),
\end{cases}
\end{equation}
where $\pi_{a-1}: E^{\rho_0}(a-1)\rightarrow \gr_{a-1}^{\rho_0}(E)$. In particular, we have $\det (\Ee^+)\cong \det (\Ee)\otimes \Oo_X((s-r)D_0)$. 
\end{proposition}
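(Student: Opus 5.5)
The plan is to reduce everything to Proposition \ref{elem-transf} together with the effect of twisting by $\Oo_X(D_0)$ on Klyachko data. Since $\Ee^+=\Ee^-\otimes\Oo_X(D_0)$ and both factors are $T$-equivariant, $\Ee^+$ is a $T$-equivariant vector bundle of rank $s$. Restricting to the dense torus, where $\Oo_X(D_0)$ is trivialized by its canonical equivariant section, gives generic fiber $E$.

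For the filtrations, I will use the standard fact (visible from the lattice description at $\eta_\rho$) that tensoring with the equivariant line bundle $\Oo_X(D_0)$ leaves the filtrations at $\rho\neq\rho_0$ unchanged. At $\rho_0$ it shifts the filtration by one, namely $F^{\rho_0}(j)\mapsto F^{\rho_0}(j-1)$. This is the inverse of the shift $(E^-)^{\rho_0}(i)=E^{\rho_0}(i+1)$ seen in the identity-quotient example, so it is consistent with the convention there. Combining this with \eqref{elem-trans-ray} gives $(E^+)^\rho(j)=E^\rho(j)$ for $\rho\neq\rho_0$, and
\[
(E^+)^{\rho_0}(a)=(E^-)^{\rho_0}(a-1)=\pi_{a-1}^{-1}(\ker q_{a-1}).
\]
The determinant formula then follows from $\det(\Ee^-\otimes\Oo_X(D_0))=\det(\Ee^-)\otimes\Oo_X(sD_0)$ and $\det\Ee^-\cong\det\Ee\otimes\Oo_X(-rD_0)$.

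The main work is the exact sequence \eqref{positive-transform-exact}. Start from $0\to\Ee^-\to\Ee\to i_*\Qq\to 0$, which is exact on the right because $q$ and restriction are surjective. Restricting to $D_0$ gives the exact sequence $\mathcal{T}or_1(i_*\Qq,\Oo_X)\to\Ee^-_{|D_0}\to\Ee_{|D_0}\to\Qq\to 0$. Tensoring the resolution with $\Oo_{D_0}$ yields
\[
0\to \Qq\otimes\Oo_{D_0}(-D_0)\to \Ee^-_{|D_0}\to \Kk\to 0.
\]
The key local check uses the diagonal form $\mathrm{diag}(1,\dots,1,t,\dots,t)$ from the proof of Proposition \ref{elem-transf}. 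In that basis, the map $\Ee^-_{|D_0}\to\Ee_{|D_0}$ has image exactly $\Kk$, and its kernel is spanned by the $t$-multiples, which are identified with $\Qq(-D_0)$.

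Next, the natural inclusion $t\Ee\subseteq\Ee^-$, that is $\Ee(-D_0)\hookrightarrow\Ee^-$, has cokernel $i_*$ of the image of $\Ee(-D_0)_{|D_0}\to\Ee^-_{|D_0}$. In the local basis this cokernel is $i_*\Kk$ up to the twist by $\Oo_{D_0}(-D_0)$. More precisely, $\Ee^-/\Ee(-D_0)\cong i_*(\Kk)$ locally, via $\Ee^-\subset\Ee\to\Ee_{|D_0}$, whose image is $\Kk$ and whose kernel is $\Ee(-D_0)$. Tensoring the sequence $0\to\Ee(-D_0)\to\Ee^-\to i_*\Kk\to0$ with $\Oo_X(D_0)$ gives $0\to\Ee\to\Ee^+\to i_*(\Kk\otimes\Oo_{D_0}(D_0))\to 0$, which is equivariant since all maps are. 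This identification of $\Ee^-/\Ee(-D_0)$ with $i_*\Kk$ is the step I expect to need the most care, but it follows directly from $\ker(\Ee\to i_*\Ee_{|D_0})=\Ee(-D_0)$. As a consistency check, the sequence reproduces the determinant formula, since $c_1(i_*\Kk')=(s-r)D_0$.
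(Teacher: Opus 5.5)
Your proposal is correct. For the filtrations and the determinant it follows the paper exactly. Both arguments write $\Ee^+=\Ee^-(D_0)$, use that the twist shifts the $\rho_0$-filtration so that $(E^+)^{\rho_0}(a)=(E^-)^{\rho_0}(a-1)$, and then invoke Proposition~\ref{elem-transf}.

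The genuine difference is in how you obtain the exact sequence \eqref{positive-transform-exact}.

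\emph{The paper's route.} It works on the Klyachko side:
it produces $\Ee\to\Ee^+$ as the identity of $E$, which preserves filtrations because $E^{\rho_0}(a)=\ker\pi_{a-1}\subseteq\pi_{a-1}^{-1}(K_{a-1})$;
it computes $L_0^+/L_0\cong\bigl(\bigoplus_a K_a\bigr)\otimes_\CC\bigl(t_0^{-1}R_0/R_0\bigr)$ at the generic point of $D_0$;
it then appeals to Klyachko's equivalence on $D_0$ to identify the cokernel.

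\emph{Your route.} You argue purely sheaf-theoretically. The map $\Ee\to i_*\Ee_{|D_0}$ is surjective with kernel $\Ee(-D_0)$, and $\Ee^-$ is the preimage of $i_*\Kk$. Hence $\Ee(-D_0)\subseteq\Ee^-$ with $\Ee^-/\Ee(-D_0)\cong i_*\Kk$, canonically and equivariantly. Twisting by $\Oo_X(D_0)$ and applying the projection formula gives the sequence.

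\emph{Comparison.} Your route is shorter and global. It shows directly that the cokernel is the pushforward of a locally free sheaf on $D_0$. The paper's generic-point computation takes this for granted before applying Klyachko's classification on $D_0$. In exchange, the paper's route displays the inclusion weight by weight at $\rho_0$: kernel directions are raised by one and quotient directions are fixed. That filtration-level picture is what the paper is after.

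A few small inaccuracies do not affect the argument but should be cleaned up.
The isomorphism $\Ee^-/\Ee(-D_0)\cong i_*\Kk$ is global, not merely local.
The sentence describing that cokernel as ``$i_*$ of the image'' of $\Ee(-D_0)_{|D_0}\to\Ee^-_{|D_0}$ is wrong. It is $i_*$ of the cokernel of that map; the image is $\Qq(-D_0)$.
In the restriction step, $\mathcal{T}or_1(i_*\Qq,\Oo_X)$ should be $\mathcal{T}or_1(i_*\Qq,\Oo_{D_0})$.
The whole restriction detour is in fact unnecessary, since your final ``more precisely'' argument stands on its own.
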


\begin{proof}
By definition, $\Ee^+=\Ee^-(D_0)$ with $\Ee^-=\mathrm{Elm}_{D_0,\Qq}(\Ee)$ is locally free and $T$-equivariant. Set 
\[
K_a=\ker (q_a)
\]
in \eqref{stab-weight-map}. Proposition~\ref{elem-transf} gives, for every integer $b$, $(E^-)^{\rho_0}(b)=\pi_b^{-1}(K_b)$. Taking $b=a-1$, we obtain
\[
(E^+)^{\rho_0}(a)=(E^-)^{\rho_0}(a-1)=\pi_{a-1}^{-1}(K_{a-1})=\left\{
e\in E^{\rho_0}(a-1)
\ \middle|\
q_{a-1}\bigl(\overline{e}\bigr)=0
\right\}.
\]
Thus we have
\begin{equation}
(E^+)^{\rho_0}(a)=\pi_{a-1}^{-1}(K_{a-1})\subset E^{\rho_0}(a-1)
\end{equation}
directly from the negative transformation filtration. It also explains the index $a-1$: the positive transform first imposes the kernel condition on the $(a-1)$-st graded piece and then the twist by $D_0$ moves that contribution to filtration index $a$. Equivalently, $(E^+)^{\rho_0}(a)$ is the unique subspace of $E^{\rho_0}(a-1)$ that contains $E^{\rho_0}(a)$ and whose image in
\begin{equation}
E^{\rho_0}(a-1)/E^{\rho_0}(a)
=\gr_{a-1}^{\rho_0}(E)
\end{equation}
is exactly $K_{a-1}$. The negative transformation has the same generic fiber $E$ and the same filtrations as $\Ee$ at every $\rho\neq\rho_0$. Therefore we have
\begin{equation}
(E^+)^\rho(a)=E^\rho(a) \qquad \text{For }\rho\neq\rho_0.
\end{equation}

On the weight-$a$ graded piece of $\Ee$, the negative transformation leaves $K_a$ at weight $a$ and lowers the complementary quotient directions from weight $a$ to weight $a-1$. Tensoring by $\Oo_X(D_0)$ raises every weight at $\rho_0$ by one. Hence the positive transform raises the kernel directions $K_a$ from weight $a$ to weight $a+1$, while the quotient directions retain weight $a$. At index $a$, it therefore contains $E^{\rho_0}(a)$ together with exactly those classes in $\gr_{a-1}^{\rho_0}(E)$ that lie in $K_{a-1}$. Intrinsically, this is
\[
(E^+)^{\rho_0}(a)=\pi_{a-1}^{-1}(K_{a-1}).
\]
No choice of a complement to $K_{a-1}$ is involved in this inverse-image formula. The transformation is supported on $D_0$, so its generic fiber is still $E$ and its filtrations at $\rho\neq\rho_0$ are unchanged.

For every integer $a$, we have $\ker (\pi_{a-1})=E^{\rho_0}(a)$. Therefore
\begin{equation}
E^{\rho_0}(a)=\ker\pi_{a-1}\subseteq\pi_{a-1}^{-1}(K_{a-1})=(E^+)^{\rho_0}(a).
\end{equation}
At every other ray the two filtrations are equal. Thus the identity map on the common generic fiber $E$ preserves all Klyachko filtrations. By Klyachko's equivalence, it induces an equivariant morphism $\Ee \rightarrow \Ee^+$, which is the identity over the dense torus and so is injective.

As in the proof of Proposition \ref{elem-transf}, let $R_0=\Oo_{X,\eta_0}$ be the discrete valuation ring with $t_0$ a uniformizer, and $L_0$ be the $R_0$-lattice determined by $\Ee_{\eta_0}$. In each weight-$a$ component, the lattice of $\Ee^+$ is obtained from the lattice of $\Ee$ by adjoining $t_0^{-1}$ times the directions in $K_a$ and by making no change in the quotient directions. Consequently, we have
\begin{equation}
L_0^+/L_0 \cong \left(\bigoplus_a K_a\right) \otimes_\CC \left(t_0^{-1}R_0/R_0\right).
\end{equation}
Here, $\bigoplus_aK_a$ is the stabilizer-weight decomposition of the generic fiber of $\Kk=\ker q$ on $D_0$, while $\left(t_0^{-1}R_0/R_0\right)$ is the generic fiber of $\Oo_{D_0}(D_0)=\Nn_{D_0|X}$. Hence the cokernel at the generic point of $D_0$ is the generic fiber of $\Kk\otimes\Oo_{D_0}(D_0)$. This identification also respects the filtrations coming from every ray of
the quotient fan $\Sigma_0$. Indeed, the filtrations of $\Kk$ are obtained by taking the kernels of the strict filtered maps defining $q$, and the factor $\left(t_0^{-1}R_0/R_0\right)$ contributes precisely the equivariant filtration of the normal line $\Oo_{D_0}(D_0)$. Therefore the filtered cokernel is the Klyachko data of $\Kk\otimes\Oo_{D_0}(D_0)$. Klyachko's equivalence on $D_0$ now identifies the global cokernel with $i_*\bigl(\Kk\otimes\Oo_{D_0}(D_0)\bigr)$. 

Finally, Proposition~\ref{elem-transf} gives
\[
\det(\Ee^+)\cong \det(\Ee^-)\otimes\Oo_X(nD_0)\cong \det(\Ee)\otimes\Oo_X((s-r)D_0). \qedhere
\]
\end{proof}

\begin{remark}
Equivalently, one may start with an equivariant subbundle $\Kk \subseteq\Ee_{|D_0}$ and define the positive transform as the subsheaf of $\Ee(D_0)$ consisting of meromorphic sections with at most a simple pole along $D_0$ whose polar value belongs to $\Kk\otimes\Oo_{D_0}(D_0)$. Formula \eqref{positive-transform-exact} is the corresponding residue sequence. Thus the negative transform lowers the quotient directions by one, while the positive transform raises the kernel directions by one.
\end{remark}

\begin{remark}
For fixed toric vector bundles $\Ee_{|D_0}$ and $\Qq$, define
\[
\Hh_a:=
\left\{
f_a\in\Hom_\CC \bigl(\gr_a^{\rho_0}(E),W_a\bigr)
\ \middle|\
f_a\bigl(F_a^{\bar\rho}(j)\bigr)
\subseteq Q_a^{\bar\rho}(j)
\text{ for every }\bar\rho\in\Sigma_0(1),\ \forall j\in\ZZ
\right\}
\]
for every stabilizer weight $a$. Each inclusion is a system of linear equations in the entries of $f_a$, so $\Hh_a$ is a finite-dimensional vector space. Klyachko's description of morphisms, together with the stabilizer-weight decomposition, gives
\[
\Hom_T(\Ee_{|D_0},\Qq) \cong \bigoplus_a\Hh_a,
\]
inside which there exists a Zariski open subset
\[
\mathrm{Sur}_T(\Ee_{|D_0},\Qq):=
\left\{ q\in\Hom_T(\Ee_{|D_0},\Qq)\mid q\text{ is surjective as a bundle map}
\right\}.
\]
If it is nonempty, then
\[
\dim\mathrm{Sur}_T(\Ee_{|D_0},\Qq) =\sum_a\dim\Hh_a.
\]
Note that $q$ is surjective if and only if the maps
\[
q_x:(\Ee_{|D_0})_x\twoheadrightarrow \Qq_x
\]
is surjective for every $T$-fixed point $x\in D_0$. In particular, $\mathrm{Sur}_T(\Ee_{|D_0},\Qq)$ is defined as a quasi affine determinantal variety. Note also that two surjections with fixed target $\Qq$ determine the same kernel if and only if they differ by composition with an equivariant automorphism of $\Qq$. Therefore the parameter space of quotient maps with target isomorphic to $\Qq$, is represented by the quotient
\[
\mathrm{Sur}_T(\Ee_{|D_0},\Qq)/\mathrm{Aut}_T(\Qq).
\]
\end{remark}


\section{Logarithmic tangent bundle}

For a reduced divisor $B=\sum_{\rho\in S}D_{\rho}$ on a smooth toric variety $X$, one can define the logarithmic tangent bundle
\begin{equation}
\Tt_X(-\log B)
=\left\{\theta\in \Tt_X\;\middle|\;\theta(\Ii_{B, X})\subseteq \Ii_{B,X}\right\},
\end{equation}
which is roughly speaking the sheaf of vector fields that are tangent to every component of $B$. Fix a smooth divisor $D=D_0\notin S$ so that $B+D_0$ again has simple normal crossings. Restricting the exact sequence
\[
0\to \Tt_X\bigl(-\log (B+D_0)\bigr) \to \Tt_X(-\log B) \to i_*\Nn_{D_0|X} \to 0
\]
to $D_0$, one has a surjection $q: ~\Tt_X(-\log B)_{|D_0}\twoheadrightarrow \Nn_{D_0|X}$.

\begin{proposition}\label{log-tangent-filt}
We have
\begin{equation}
\mathrm{Elm}_{D_0,\Nn_{D_0/X}}\bigl(\Tt_X(-\log B)\bigr)=\Tt_X\bigl(-\log(B+D_0)\bigr),
\end{equation}
i.e., the elementary transform is precisely the subsheaf of logarithmic vector fields that are additionally tangent to $D_0$. 
\end{proposition}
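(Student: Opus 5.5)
The plan is to compare Klyachko data on both sides and apply Theorem~\ref{kly} together with Proposition~\ref{elem-transf}. Two routes are available. The definitional one notes that the exact sequence defining $q$ already exhibits $\Tt_X(-\log(B+D_0))$ as $\ker(\Tt_X(-\log B)\to i_*\Nn_{D_0|X})$. The map $\Tt_X(-\log B)\to i_*\Nn_{D_0|X}$ factors as restriction to $D_0$ followed by $q$, so the statement is essentially the definition of $\mathrm{Elm}_{D_0,\Nn_{D_0/X}}$. For this route I will only check that the surjection $\Tt_X(-\log B)_{|D_0}\to\Nn_{D_0|X}$ obtained by restricting is the $q$ in \eqref{quotient}. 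I will also verify surjectivity: $\Nn_{D_0|X}$ is locally free of rank $1$ on $D_0$, and $D_0\notin S$ with $B+D_0$ snc, so locally $\partial/\partial x_0$ is a log field for $B$ that maps onto a generator. Because the paper's emphasis is on filtrations, I will additionally confirm the identification combinatorially, and this gives an independent proof.

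For the filtration check I first need the Klyachko filtrations of $\Tt_X(-\log B)$. By the standard description (extending Example~\ref{Tangentsheaf}), the filtration for $\rho\in S$ is $E^\rho(j)=E$ for $j\le 0$ and $0$ for $j>0$. For $\rho\notin S$ it agrees with that of $\Tt_X$: $E$ for $j\le0$, $\CC u_\rho$ for $j=1$, and $0$ for $j\ge2$. I will justify this by the lattice description at $\eta_\rho$. Near $D_\rho$, with local coordinate $x_\rho$, the log condition replaces the generator $\partial_{x_\rho}$ by $x_\rho\partial_{x_\rho}$, the invariant field corresponding to $u_\rho$. This lowers the $u_\rho$-direction from weight $1$ to weight $0$.

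Next I apply Proposition~\ref{elem-transf} with $\Ee=\Tt_X(-\log B)$ and $\rho_0\notin S$. Here $\gr_0^{\rho_0}(E)=E/\CC u_0$, $\gr_1^{\rho_0}(E)=\CC u_0$, and all other graded pieces vanish. The normal bundle quotient has stabilizer weight $1$: it is spanned by the class of $\partial_{x_0}$, on which $T_0$ acts with weight $\langle \cdot,u_0\rangle=1$. So $q_0=0$ and $q_1:\CC u_0\to W_1$ is an isomorphism. The formula \eqref{elem-trans-ray} then gives the following. We have $(E^-)^{\rho_0}(a)=E^{\rho_0}(a)$ for $a\le0$. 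At $a=1$, $(E^-)^{\rho_0}(1)=\pi_1^{-1}(\ker q_1)=E^{\rho_0}(2)=0$. All other rays are unchanged. This is exactly the filtration of $\Tt_X(-\log(B+D_0))$ from the previous step, since $\rho_0$ now lies in $S\cup\{\rho_0\}$. Theorem~\ref{kly} then yields the isomorphism. Moreover, both sides are subsheaves of $\Tt_X(-\log B)$ with the same generic fiber and identity map, so they are equal as subsheaves.

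The main obstacle is the weight bookkeeping: showing that $q$ is nonzero exactly on $\gr_1^{\rho_0}$ and kills $\gr_0^{\rho_0}$. I would settle this in a local invariant chart $U_\sigma\cong\CC^n$ with $\rho_0\preceq\sigma$. There the fields $x_i\partial_{x_i}$ (for $\rho_i\in S$) and $\partial_{x_i}$ (otherwise) are $T$-eigensections. Of these, only $\partial_{x_0}$ has nonzero image in $\Nn_{D_0|X}=\Oo_{D_0}(D_0)$, and its eigencharacter has $u_0$-pairing equal to $1$.
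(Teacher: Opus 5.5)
Your proposal is correct, and its filtration argument is the paper's proof: the graded pieces at $\rho_0$ are $E/\CC u_0$ and $\CC u_0$, $q_0=0$ and $q_1$ is an isomorphism, and Proposition~\ref{elem-transf} deletes the step $\CC u_0$ at index $1$, which gives exactly the Klyachko data of $\Tt_X(-\log(B+D_0))$. Your additions are sound and go a little beyond what the paper writes: the definitional route is valid, since by adjunction the composite $\Tt_X(-\log B)\to i_*\Tt_X(-\log B)_{|D_0}\to i_*\Nn_{D_0|X}$ is the map in the defining sequence (though this only moves the content into the exactness of that sequence, which the paper takes for granted); your local-chart computation of the weight of $\partial_{x_0}$ justifies $q_0=0$ and $q_1\neq0$, which the paper only asserts; and your lattice comparison upgrades the paper's $\cong$ to the equality of subsheaves that the statement actually claims.
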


\begin{proof}
Recall that filtrations for $\Tt_X$ from Example \ref{Tangentsheaf}. The line $\CC u_{\rho}$ is the normal direction to the invariant divisor $D_\rho$. Requiring a toric vector field to be tangent to $D_\rho$ removes this normal direction. Consequently, the Klyachko filtrations of $\Tt_X(-\log B)$ are
\begin{equation}\label{log-tangent-klyachko}
\Tt_X(-\log B)^\rho(j)=
\begin{cases}
E,&j\leq0;\\
0,&j\geq1
\end{cases}
\qquad \text{ for }\rho\in S,
\end{equation}
and is equal to the filtrations for $\Tt_X$ when $\rho\notin S$. Since $\rho_0\notin S$, the filtration of $\Tt_X(-\log B)$ at $\rho_0$ is
\begin{equation}\label{log-tangent-before}
\Tt_X(-\log B)^{\rho_0}(j)=
\begin{cases}
E,&j\leq0;\\
\CC u_0,&j=1;\\
0,&j\geq2.
\end{cases}
\end{equation}
Its only nonzero associated graded pieces are
\[
\gr_0^{\rho_0}\bigl(\Tt_X(-\log B)\bigr)=E/\CC u_0,
\qquad
\gr_1^{\rho_0}\bigl(\Tt_X(-\log B)\bigr)=\CC u_0.
\]
Then the induced maps on each graded piece are
\begin{equation}\label{normal-grade}
q_0:~E/\CC u_0\longrightarrow 0,
\qquad
q_1:~\CC u_0\xrightarrow{\cong }\bigl(\Nn_{D_0|X}\bigr)_{\eta_0}.
\end{equation}
Here, the first map is zero because the weight-zero component consists of directions tangent to $D_0$, and the second map is an isomorphism because $u_0$ is precisely the one-dimensional normal direction.

Let $\Ff:=\ker\left(\Tt_X(-\log B)\longrightarrow i_*\Nn_{D_0|X}\right)$. By Proposition~\ref{elem-transf}, all filtrations of $\Ff$ at rays $\rho\neq\rho_0$ agree with those of $\Tt_X(-\log B)$. At $\rho_0$, \eqref{elem-trans-ray} and \eqref{normal-grade} give
\[
\begin{cases}
F^{\rho_0}(0)&=\pi_0^{-1}(\ker q_0)=\pi_0^{-1}(E/\CC u_0)=E;\\
F^{\rho_0}(1)&=\pi_1^{-1}(\ker q_1)=\pi_1^{-1}(0)=0.
\end{cases}
\]
It follows that
\begin{equation}\label{reft}
F^{\rho_0}(j)=
\begin{cases}
E,&j\leq0;\\
0,&j\geq1.
\end{cases}
\end{equation}
Adding $D_0$ to $B$ changes the set of boundary rays from $S$ to $S\cup\{\rho_0\}$. Then the filtrations of
$\Tt_X(-\log(B+D_0))$ agree with those of $\Tt_X(-\log B)$ at $\rho\neq\rho_0$, while its filtration at $\rho_0$ is exactly \eqref{reft}. Thus $\Ff$ and $\Tt_X(-\log(B+D_0))$ have identical Klyachko data, concluding that $\Ff \cong \Tt_X(-\log(B+D_0))$.
\end{proof}

\begin{remark}
The proof of Proposition~\ref{log-tangent-filt} shows that adding one invariant boundary component deletes the one-dimensional normal step at its ray and changes no other ray filtration. The change at $\rho_0$ has the effect to annihilate the line $\CC u_0$ for $j=1$. This description makes it possible to iterate the construction along any collection of invariant divisors.
\end{remark}

\begin{example}\label{full-div}
Let $\partial X=\sum_{\rho\in\Sigma(1)}D_\rho$ be the full toric boundary of a smooth toric variety $X$. Starting with $\Tt_X$ and successively applying the rank-one elementary transform along every invariant divisor gives
\begin{equation}
\Tt_X(-\log\partial X)\cong  N\otimes_\ZZ \Oo_X;
\end{equation}
see \cite[page 87]{Ful93}. Indeed, after all transformations, every ray filtration is
\[
E \text{ for }j\leq0,\quad \text{ and } \quad 0 \text{ for } j\geq1,
\]
which is the Klyachko data of the trivial bundle with fiber $E$. Geometrically, the infinitesimal $T$-action trivializes the logarithmic tangent bundle of the toric pair $(X,\partial X)$.
\end{example}

Conversely, consider the logarithmic tangent bundle $\Ee\cong\Tt_X\bigl(-\log(B+D_0)\bigr)$. Its restriction to $D_0$ fits into an exact sequence
\begin{equation}\label{restrict}
0\to \Oo_{D_0} \to \Tt_X\bigl(-\log(B+D_0)\bigr)_{|D_0}
\xrightarrow{~q~} \Tt_{D_0}(-\log B_0) \to 0
\end{equation}
with $B_0:=B_{|D_0}$. The filtration for $\Ee$ at $\rho_0$ is $E^{\rho_0}(a)=
\begin{cases}
E,&a\leq0;\\
0,&a\geq1,
\end{cases}$ and so its only nonzero associated graded piece is $\gr_0^{\rho_0}(E)=E$. The generic fiber of $T_{D_0}(-\log B_0)$ is $\overline E=E/\CC u_0$, and the map induced by $q$ on the only nonzero graded piece is
\begin{equation}
q_0:E\longrightarrow\overline E,
\qquad
\ker q_0=\CC u_0.
\end{equation}
In particular, the trivial line in \eqref{restrict} is represented in Klyachko data by the normal ray line $\CC u_0\subset E$. Apply the positive transformation to the quotient $q$ in \eqref{restrict} to have
\begin{equation}
0\to  \Tt_X\bigl(-\log(B+D_0)\bigr)\to \Ee^+:=\mathrm{Elm}^+_{D_0,q}\!\left(\Tt_X\bigl(-\log(B+D_0)\bigr)\right) \to i_*\Nn_{D_0|X} \to 0
\end{equation}
by Proposition~\ref{positive-transform-filt}, because we have
\[
\ker q\simeq\Oo_{D_0} \quad \text{and} \quad \ker q\otimes\Oo_{D_0}(D_0)
\cong \Nn_{D_0|X},
\]

\begin{proposition}
We have
\begin{equation}\label{eq:positive-transform-log-tangent}
\mathrm{Elm}^+_{D_0,q}\!\left(\Tt_X\bigl(-\log(B+D_0)\bigr)\right)
\cong \Tt_X(-\log B),
\end{equation}
i.e., the positive elementary transformation removes the tangency condition along $D_0$.
\end{proposition}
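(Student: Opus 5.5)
The plan is to compare Klyachko data, as in the proof of Proposition~\ref{log-tangent-filt}, using the filtration formula \eqref{elem-trans-ray-pos} of Proposition~\ref{positive-transform-filt}. Klyachko's equivalence (Theorem~\ref{kly}) then reduces the isomorphism \eqref{eq:positive-transform-log-tangent} to checking two things: both sides have the same generic fiber $E=N\otimes_\ZZ\CC$, and their filtrations agree ray by ray.

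First, the rays $\rho\neq\rho_0$. By Proposition~\ref{positive-transform-filt}, the filtrations of $\Ee^+$ at $\rho\neq\rho_0$ coincide with those of $\Tt_X(-\log(B+D_0))$. By \eqref{log-tangent-klyachko}, these are also the filtrations of $\Tt_X(-\log B)$, since the boundary sets $S\cup\{\rho_0\}$ and $S$ differ only at $\rho_0$. So nothing needs to be done away from $\rho_0$.

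Second, the ray $\rho_0$. Here the filtration of $\Ee=\Tt_X(-\log(B+D_0))$ is $E$ for $a\leq0$ and $0$ for $a\geq1$. The only nonzero graded piece is $\gr_0^{\rho_0}(E)=E$, and $\pi_0$ is the identity. The graded map is $q_0\colon E\to E/\CC u_0$ with kernel $K_0=\CC u_0$, and $K_b=0$ for $b\neq0$. Applying \eqref{elem-trans-ray-pos}, I would compute $(E^+)^{\rho_0}(a)=\pi_{a-1}^{-1}(K_{a-1})$ case by case.
\begin{itemize}
\item For $a\leq0$: here $E^{\rho_0}(a-1)=E^{\rho_0}(a)=E$, so $\gr_{a-1}^{\rho_0}(E)=0$ and the preimage is all of $E$.
\item For $a=1$: the preimage is $\pi_0^{-1}(\CC u_0)=\CC u_0$.
\item For $a\geq2$: here $E^{\rho_0}(a-1)=0$, so the preimage is $0$.
\end{itemize}
This is exactly the filtration \eqref{log-tangent-before} of $\Tt_X(-\log B)$ at $\rho_0$, since $\rho_0\notin S$. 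Hence the Klyachko data agree at every ray, and the two bundles are equivariantly isomorphic.

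The step I expect to need the most care is justifying that $q_0$ is indeed the projection $E\to E/\CC u_0$ with kernel exactly $\CC u_0$. This means identifying the sub line bundle $\Oo_{D_0}$ in \eqref{restrict} with the normal ray line $\CC u_0$, and identifying the generic fiber of $\Tt_{D_0}(-\log B_0)$ with $\overline{E}=E/\CC u_0=\overline N\otimes\CC$. I would argue this via Proposition~\ref{restriction}. The restricted filtrations \eqref{Klyachko-filt} on $\gr_0^{\rho_0}(E)=E$ are the images of $E^\rho(j)$. Passing to the quotient by $\CC u_0$, they become $\CC\bar u_\rho$ at $j=1$ exactly for $\rho\notin S$, which is the Klyachko data of $\Tt_{D_0}(-\log B_0)$ on the fan $\Sigma_0$. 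The kernel $\CC u_0$ carries trivial filtrations, matching $\Oo_{D_0}$; this uses the infinitesimal action of $T_0$, which is tangent to $D_0$ and vanishes there. As a final consistency check, the determinant formula of Proposition~\ref{positive-transform-filt} with $s-r=1$ gives $\det\Ee^+\cong\det\Ee\otimes\Oo_X(D_0)$, which agrees with the residue sequence.
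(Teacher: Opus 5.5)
Your proposal is correct and follows essentially the same route as the paper. Both compute $(E^+)^{\rho_0}(a)=\pi_{a-1}^{-1}(\ker q_{a-1})$ via Proposition~\ref{positive-transform-filt}, recover the filtration \eqref{log-tangent-before}, and note that the filtrations at all other rays are unchanged. Your extra check that $q_0$ is the projection $E\to E/\CC u_0$, using the restricted filtrations of Proposition~\ref{restriction}, supplies a justification the paper only asserts in the discussion preceding the proposition.
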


\begin{proof}
The filtration for $\Ee^+$ at $\rho_0$ is computed by Proposition~\ref{positive-transform-filt} as
\begin{equation}
(E^+)^{\rho_0}(a)
=\pi_{a-1}^{-1}(\ker q_{a-1}).
\end{equation}
Together with the filtration $\{E^{\rho}(a)\}_{a\in \ZZ}$, we obtain
\begin{equation}
(E^+)^{\rho_0}(a)=
\begin{cases}
E,&a\leq0;\\
\CC u_0,&a=1;\\
0,&a\geq2.
\end{cases}
\end{equation}
Indeed, the new step at index $1$ is
\[
(E^+)^{\rho_0}(1)
=\pi_0^{-1}(\ker q_0)
=\CC v_0.
\]
At every other ray $\rho\neq\rho_0$, the positive transformation leaves the filtration unchanged so that we have the Klyachko data of $\Tt_X(-\log B)$.  
\end{proof}

There is also a dual interpretation for logarithmic cotangent bundles. Dualizing the logarithmic tangent picture leads to the Poincar\'e residue sequence
\begin{equation}
0\to \Omega_X^1(\log B) \to \Omega_X^1\bigl(\log(B+D_0)\bigr) \to i_*\Oo_{D_0} \to 0.
\end{equation}
For $\Ee=\Omega_X^1(\log B)$, the logarithmic conormal sequence on $D_0$ is
\begin{equation}\label{restr-log-cot-pos-quot}
0\to \Nn_{D_0|X}^{\vee}\to \Omega_X^1(\log B)_{|D_0}\xrightarrow{~q~} \Omega_{D_0}^1(\log B_0) \to 0.
\end{equation}
Consequently, the kernel in the positive transformation construction is $\Kk=\Nn_{D_0|X}^{\vee}$ so that $\Kk\otimes\Oo_{D_0}(D_0)\cong \Oo_{D_0}$. Proposition~\ref{positive-transform-filt} therefore produces an exact sequence
\begin{equation}
0\to \Omega_X^1(\log B)\to \mathrm{Elm}^+_{D_0,q}\bigl(\Omega_X^1(\log B)\bigr) \to  i_*\Oo_{D_0} \to 0.
\end{equation}
\begin{corollary}
\begin{equation}
\label{pos-transf-log-cot}
\mathrm{Elm}^+_{D_0,q}\bigl(\Omega_X^1(\log B)\bigr)
\cong \Omega_X^1\bigl(\log(B+D_0)\bigr), 
\end{equation}
i.e., the positive transformation permits a logarithmic pole on differential
forms.
\end{corollary}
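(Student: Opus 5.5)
We argue entirely on Klyachko data, as in the two preceding propositions. By Theorem~\ref{kly} it suffices to show that $\Omega^1_X(\log(B+D_0))$ and $\mathrm{Elm}^+_{D_0,q}(\Omega^1_X(\log B))$ have the same filtrations on the common generic fiber $E^\vee=M\otimes_\ZZ\CC$. The first step is to write down the Klyachko filtrations of the logarithmic cotangent bundles by dualizing \eqref{log-tangent-klyachko} and \eqref{log-tangent-before}. Using the convention $(E^\vee)^\rho(j)=\bigl(E^\rho(1-j)\bigr)^\perp$ for duals, we get the following. For $\rho\in S$ (and, after adding, for $\rho_0$) the filtration of $\Omega^1_X(\log B)$ is trivial: it equals $E^\vee$ for $j\le 0$ and $0$ for $j\ge1$. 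For $\rho\notin S$ it is
\[
(E^\vee)^\rho(j)=\begin{cases}E^\vee,&j\le -1;\\ u_\rho^\perp,&j=0;\\ 0,&j\ge1.\end{cases}
\]
This is consistent with $\Omega^1_X\cong\Tt_X^\vee$ and Example~\ref{full-div}. As a sanity check, one verifies that this dual convention reproduces $\Oo_X(-D_\rho)$ in rank one.

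Next, we compute the data of the quotient $q$ in \eqref{restr-log-cot-pos-quot} at $\rho_0\notin S$. The only nonzero graded pieces are $\gr_{-1}^{\rho_0}=E^\vee/u_0^\perp$ and $\gr_0^{\rho_0}=u_0^\perp$. The target $\Omega^1_{D_0}(\log B_0)$ has generic fiber $\overline N^\vee\otimes\CC=u_0^\perp$. So $q_0:u_0^\perp\to u_0^\perp$ is the identity and $q_{-1}:E^\vee/u_0^\perp\to 0$ is zero, which gives $\ker q_0=0$ and $\ker q_{-1}=E^\vee/u_0^\perp$. The justification mirrors \eqref{normal-grade}: the weight $-1$ piece is the conormal line $\Nn^\vee_{D_0|X}$, i.e.\ $\Kk$, on which $q$ vanishes. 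Also, $T_0$ acts on $u_0^\perp$ with weight $0$ and on the class of $m_0$ with weight $\langle m_0,u_0\rangle$, matching the indices after the dual sign convention.

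Then we apply Proposition~\ref{positive-transform-filt}:
\[
(E^+)^{\rho_0}(a)=\pi_{a-1}^{-1}(\ker q_{a-1}).
\]
For $a\le -1$ this gives $E^\vee$, since $E^{\rho_0}(a-1)=E^\vee$ and $\gr_{a-1}=0$ for $a\le-1$. For $a=0$ it gives $\pi_{-1}^{-1}(E^\vee/u_0^\perp)=E^\vee$. For $a=1$ it gives $\pi_0^{-1}(0)=E^{\rho_0}(1)=0$. Hence the new filtration at $\rho_0$ is the trivial one, $E^\vee$ for $a\le0$ and $0$ for $a\ge1$, while all other rays are unchanged. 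This is exactly the Klyachko data of $\Omega^1_X(\log(B+D_0))$, and Theorem~\ref{kly} yields \eqref{pos-transf-log-cot}. The exact sequence displayed before the corollary then becomes the residue sequence.

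The main obstacle is bookkeeping of signs: the dual-filtration convention, and the correct identification of which graded index carries $\Kk=\Nn^\vee$ versus the quotient. If the indices come out shifted, I would recheck against the determinant formula. Proposition~\ref{positive-transform-filt} gives $\det\Ee^+=\det\Omega^1_X(\log B)\otimes\Oo_X(D_0)$, since $s-r=1$, which matches $\det\Omega^1_X(\log(B+D_0))=K_X+B+D_0$. As an alternative cross-check, one could dualize Proposition~\ref{log-tangent-filt}: $(\mathrm{Elm}^-)^\vee$ is a positive transform of the dual with respect to the annihilator quotient. That gives $\Tt_X(-\log(B+D_0))^\vee$ directly.
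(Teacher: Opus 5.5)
Your proof is correct and is essentially the paper's argument. You take the $\rho_0$-filtration of $\Omega^1_X(\log B)$, with graded pieces $E^\vee/u_0^\perp$ in index $-1$ and $u_0^\perp$ in index $0$, identify $q_{-1}=0$ and $q_0$ as an isomorphism, and apply $(E^+)^{\rho_0}(a)=\pi_{a-1}^{-1}(\ker q_{a-1})$ from Proposition~\ref{positive-transform-filt}. This gives the trivial filtration at $\rho_0$, which with the unchanged filtrations at the other rays is the Klyachko data of $\Omega^1_X(\log(B+D_0))$. Your explicit dual convention $(E^\vee)^\rho(j)=\bigl(E^\rho(1-j)\bigr)^\perp$ and the determinant check spell out what the paper leaves implicit, while the aside about $T_0$-weights is not needed and can be dropped.
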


\begin{proof}
Set $u_0^{\perp}:=\{ m\in E^\vee \mid m(u_0)=0\}$. Because $D_0$ is not a component of $B$, the filtration of $\Omega_X^1(\log B)$ at $\rho_0$ is dual to the ordinary tangent filtration and is therefore
\begin{equation}
\label{pos-log-cot-filt}
(E^{\vee})^{\rho_0}(a)=
\begin{cases}
E^\vee ,&a\leq-1,\\
u_0^{\perp},&a=0,\\
0,&a\geq1.
\end{cases}
\end{equation}
The two nonzero graded pieces are
\begin{equation}
\gr_{-1}^{\rho_0}(E^\vee)=E^{\vee}/u_0^{\perp},\qquad \gr_0^{\rho_0}(E^\vee)=u_0^{\perp}.
\end{equation}
Under the quotient in \eqref{restr-log-cot-pos-quot}, the first graded piece is the conormal kernel and the second maps isomorphically to the generic fiber of $\Omega_{D_0}^1(\log B_0)$. Thus we have
\begin{equation}
q_{-1}=0, \qquad K_{-1}=E^\vee/u_0^{\perp},\qquad q_0:u_0^{\perp}\xrightarrow{~~\cong~~ } \overline {E^\vee},\qquad K_0=0,
\end{equation}
where $\overline{E^\vee}=(E/\CC u_0)^{\vee}\cong u_0^{\perp}$. The positive-transformation now raises the conormal graded line from weight $-1$ to weight $0$. Explicitly,
\begin{equation}
\bigl((E^\vee)^+\bigr)^{\rho_0}(a)=
\begin{cases}
E^\vee,&a\leq0,\\
0,&a\geq1.
\end{cases}
\end{equation}
For example, at the new index $0$ one has
\begin{equation}
\bigl((E^\vee)^+\bigr)^{\rho_0}(0)=\pi_{-1}^{-1}(K_{-1})=E^\vee.
\end{equation}
All other ray filtrations remain unchanged, and these are precisely the Klyachko filtrations of $\Omega_X^1(\log(B+D_0))$, because adding $D_0$ to the logarithmic boundary changes the $\rho_0$-filtration from \eqref{pos-log-cot-filt} to the single weight-zero filtration above.     
\end{proof}


\section{Further examples of elementary transforms}

\begin{example}\label{general-split}
Let $X=X_\Sigma$ be a smooth toric variety and fix an invariant prime divisor $D=D_{\rho_0}$. Set
\[
\Ee\cong \bigoplus_{j=1}^{m}\Ll_j
\]
be a direct sum of line bundles and choose a subset $J\subseteq\{1,\ldots,m\}$ with $|J|=r$. For the projection onto the selected summands:
\begin{equation}\label{split-project}
q:~\Ee_{|D}\twoheadrightarrow \bigoplus_{j\in J}\Ll_{j|D},
\end{equation}
we have
\begin{equation}\label{general-split-transform}
\mathrm{Elm}_{D,q}(\Ee) \cong \left(\bigoplus_{j\notin J}\Ll_j\right)
\oplus \left(\bigoplus_{j\in J}\Ll_j(-D)\right).
\end{equation}
Indeed, the quotient map $q$ is the direct sum of the zero map on $\Ll_j$ for $j\notin J$ and the restriction map $\Ll_j\rightarrow  i_*(\Ll_{j|D})$ for $j\in J$. For every selected summand, the kernel of the corresponding quotient map is isomorphic to $\Ll_j(-D)$ so that we obtain \eqref{general-split-transform}. The same conclusion can also follow from the filtrations. Write
\[
\Ll_j=\Oo_X\left(\sum_{\rho\in\Sigma(1)}a_{j,\rho}D_\rho\right).
\]
On the generic line $\CC e_j$ in $E$, corresponding to $\Ll_j$, its filtration is
\[
L_j^\rho(i)=
\begin{cases}
\CC e_j,&i\leq a_{j,\rho};\\
0,&i>a_{j,\rho}.
\end{cases}
\]
At the distinguished ray $\rho_0$, the $j$-th summand jumps at $a_{j,\rho_0}$. If $j\in J$, the quotient is nonzero on the corresponding graded line, so Proposition~\ref{elem-transf} lowers the jump by one:
\[
a_{j,\rho_0}\longmapsto a_{j,\rho_0}-1.
\]
This is precisely the filtration change $\Ll_j\mapsto\Ll_j(-D)$. If $j\notin J$, the quotient vanishes on that line, so its filtration is unchanged. All filtrations at rays $\rho\neq\rho_0$ remain unchanged.
\end{example}

\begin{example}\label{tp3}
Let $X=\PP^3$ with ray generators 
\[
u_0=(1,0,0),\qquad u_1=(0,1,0),\qquad u_2=(0,0,1),\qquad u_3=(-1,-1,-1).
\]
Fix $D=D^{\rho_0}=\{x_0=0\}\cong \PP^2$. From the isomorphism
\[
\Tt_{\PP^3|D}\cong \Oo_D(1)\oplus \Tt_D,
\]
one can set $q:\Tt_{\PP^3|D}\rightarrow  \Tt_D$ the natural projection. In the filtration of $\Tt_{\PP^3}$ for $\rho_0$, the nonzero associated graded pieces are
\begin{equation}
\gr_0^{\rho_0}(\Tt_{\PP^3})=E/\CC u_0,
\qquad
\gr_1^{\rho_0}(\Tt_{\PP^3})=\CC u_0.
\end{equation}
The quotient lattice of $D$ is $\overline N=N/\ZZ u_0$ and $\overline E=E/\CC u_0$ is the generic fiber of $\Tt_D$. If $\bar\rho_i$ is generated by the image $\bar u_i$ of $u_i$ for $i=1,2,3$, then
\begin{equation}
(\Tt_D)^{\bar\rho_i}(j)=
\begin{cases}
\overline E ,&j\leq0;\\
\CC \overline{u}_i,&j=1;\\
0,&j\geq2.
\end{cases}
\end{equation}
The generic maps defining the quotient are
\begin{equation}
q_0:E/\CC u_0\xrightarrow{~~\cong~~}\overline{E},
\qquad
q_1:\CC u_0 \longrightarrow 0
\end{equation}
so that $\ker q_0=0$ and $\ker q_1=\CC u_0$. Define
\begin{equation}
\Ee^-:=\mathrm{Elm}_{D, \Tt_D}(\Tt_X)\cong \ker\left(\Tt_{\PP^3}\longrightarrow i_*\Tt_D\right).
\end{equation}
At every ray $\rho\neq\rho_0$, the filtration of $\Ee^-$ agrees with that of $\Tt_{\PP^3}$, while at $\rho_0$ the elementary-transformation formula yields
\begin{equation}
(E^-)^{\rho_0}(i)=\begin{cases}
E,&i\leq-1;\\
\CC u_0 ,&i=0,1;\\
0,&i\geq2.
\end{cases}
\end{equation}
Indeed, we have
\[
(E^-)^{\rho_0}(0)=\pi_0^{-1}(\ker q_0)
=\pi_0^{-1}(0)=\CC u_0=\pi_1^{-1}(\ker q_1)=(E^-)^{\rho_0}(1).
\]
Since the quotient has rank two and $\Oo_{\PP^3}(D)\cong \Oo_{\PP^3}(1)$, we have $c_1(\Ee^-)=2$. This example should be contrasted with the logarithmic transform: quotienting by the normal bundle gives $\Tt_{\PP^3}(-\log D)$, whereas quotienting by the tangent summand $\Tt_D$ gives the subbundle $\mathrm{Elm}_{D, \Tt_d}(\Tt_X)$ above. Finally, note that this elementary transformation bundle is unstable, included in $\Tt_{\PP^3}$ as a subsheaf, due to the existence of a destabilizing subsheaf $\Oo_{\PP^3}(1)$, produced by $\CC u_0$. 
\end{example}

\begin{example}
Use the notation of Example \ref{tp3} and take $\Ee=\bigwedge\nolimits^2 \Tt_{\PP^3}\cong \Omega_{\PP^3}^1(4)$. This is an indecomposable bundle of rank three. From the equivariant splitting $\Tt_{\PP^3|D}\cong  \Tt_D\oplus\Oo_D(1)$, we obtain
\[
\bigwedge\nolimits^2\Tt_{\PP^3|D}\cong \bigwedge\nolimits^2\Tt_D\oplus\bigl(\Tt_D\otimes\Oo_D(1)\bigr)\cong \Oo_D(3)\oplus \Tt_D(1).
\]
Projection onto the second summand gives the equivariant quotient map
\begin{equation}
q: ~\bigwedge\nolimits^2\Tt_{\PP^3|D} \twoheadrightarrow \Tt_D(1),
\end{equation}
where $\Tt_D(1)$ is an indecomposable bundle of rank two on $D$. The filtration on an exterior power is induced from that of the original bundle by
\[
\left(\bigwedge\nolimits^2 E\right)^\rho(i)=\sum_{a+b=i}E^\rho(a)\wedge E^\rho(b).
\]
At $\rho_0$, fix a subspace $U$ of dimension two such that $E=\CC u_0\oplus U$. Since the weights of $\Tt_{\PP^3}$ at this ray are $1$ on $\CC u_0$ and $0$ on $U$, we have
\[
\left(\bigwedge\nolimits^2\Tt_{\PP^3}\right)^{\rho_0}(i)
=
\begin{cases}
\bigwedge^2E ,&i\leq0,\\
u_0\wedge U,&i=1,\\
0,&i\geq2.
\end{cases}
\]
The associated graded pieces are
\[
\gr_0^{\rho_0}(\Ee)=\bigwedge\nolimits^2U\quad  \text{and} \quad \gr_1^{\rho_0}(\Ee)=u_0\wedge U,
\]
where the first has dimension one and corresponds to the summand
$\bigwedge^2\Tt_D\cong \Oo_D(3)$, while the second has dimension two and corresponds to $\Tt_D(1)$. Therefore, the quotient maps satisfy
\[
q_0:\bigwedge\nolimits^2U\longrightarrow0,
\qquad
q_1:v_0\wedge U\xrightarrow{~~\cong~~}\left(\Tt_D(1)\right)_{\eta_D},
\]
where $\eta_D$ is the generic point of $D$. 
Let
\begin{equation}
\Gg:=\mathrm{Elm}_{D, \Tt_D(1)} \left(\bigwedge\nolimits^2\Tt_{\PP^3}\right)\cong \ker\left(
\bigwedge\nolimits^2\Tt_{\PP^3}\longrightarrow i_*\Tt_D(1)\right).
\end{equation}
Since $q_0=0$ and $q_1$ is an isomorphism, the transformed filtration at $\rho_0$ is
\[
G^{\rho_0}(i)= \begin{cases}
\bigwedge^2E ,&i\leq0,\\
0,&i\geq1.
\end{cases}
\]
All other ray filtrations are unchanged. Thus the rank-two step $v_0\wedge U$ at index $1$ is deleted. Indeed, we have
\[
\Gg:=\mathrm{Elm}_{D, \Tt_D(1)} \left(\bigwedge\nolimits^2\Tt_{\PP^3}\right)\cong \Oo_{\PP^3}(2)^{\oplus 3}. 
\]
Set $L_{i\!j}:=\CC (u_i \wedge u_j)$ for $1\le i<j\le 3$ to have
\[
\bigwedge\nolimits^2 E =L_{12}\oplus L_{13}\oplus L_{23}.
\]
For the three rays $\rho\ne \rho_0$, the positive filtration steps are
\[
u_1\wedge E=L_{12}\oplus L_{13},\quad 
u_2\wedge E=L_{12}\oplus L_{23},\quad
u_3\wedge E=L_{13}\oplus L_{23}.
\]
At the ray $\rho_0$, the transformed filtration has no positive step. Thus all four ray filtrations are simultaneously split by $L_{12},L_{13},L_{23}$. Klyachko's equivalence consequently gives the equivariant splitting
\[
\Gg\cong \Oo_{\PP^3}\left(D_{\rho_1}+D_{\rho_2}\right)\oplus\Oo_{\PP^3}\left(D_{\rho_1}+D_{\rho_3}\right)\oplus\Oo_{\PP^3}\left(D_{\rho_2}+D_{\rho_3}\right).
\]
For example, $L_{12}$ has jump $1$ at $\rho_1$ and $\rho_2$, and jump $0$ at $\rho_0$ and $\rho_3$, so it gives
$\Oo_{\PP^3}\left(D_{\rho_1}+D_{\rho_2}\right)$; the other two lines are analogous. 
\end{example}


\end{document}